\renewcommand\le{\leqslant}
\renewcommand\ge{\geqslant}
\newcommand\none[1]{\|#1\|_1}
\definecolor{blue}{rgb}{0,0,1}
\definecolor{green}{rgb}{0,1,0}
\definecolor{red}{rgb}{1,0,0}
\newcommand\g{\color{green}}
\newcommand\rd{\color{red}}
\newcommand\bk{\color{black}}
\newcommand\pc{p_{\mathrm{c}}}
\newcommand\Z{\mathbb{Z}}
\newcommand\Prb{\mathbb{P}}
\newcommand\E{\mathbb{E}}
\newcommand\cL{\mathcal{L}}
\newcommand\cR{\mathcal{R}}
\newcommand\cP{\mathcal{P}}
\newcommand\cE{\mathcal{E}}
\newcommand\Fg[1]{Figure~\ref{f:#1}}
\newcommand\Binf{B^{\infty}}
\newcommand\Bone{B^{1}}
\newcommand\Sinf{S^{\infty}}
\newcommand\Sone{S^{1}}
\newcommand\floor[1]{\lfloor#1\rfloor}
\theoremstyle{plain}
\newtheorem{theorem}{Theorem}
\newtheorem{lemma}[theorem]{Lemma}
\newtheorem{conjecture}[theorem]{Conjecture}
\theoremstyle{definition}
\newtheorem{claim}[theorem]{Claim}
\newdimen\unit\newdimen\psep\newcount\nd\newcount\ndx\newbox\dotb
\newdimen\dx\newdimen\dy\newdimen\dxx\newdimen\dyy\newdimen\hgt
\newcommand\clap[1]{\hbox to 0pt{\hss{#1}\hss}}
\newcommand\vdisk[1]{{\setbox0\clap{\font\dotf=cmr10 scaled #1\dotf.}%
\raise-.5\ht0\box0}}
\newcommand\vblob[1]{{\setbox0\clap{$#1$}\raise-.55\ht0\box0}}
\newcommand\varline[2]{\setbox\dotb\hbox{\vdisk{#1}}\psep=#2\ht\dotb}
\newcommand\point[3]{\rlap{\kern#1\unit\raise#2\unit\hbox{#3}}}
\newcommand\setnd[4]{\dx=#3\unit\advance\dx-#1\unit\divide\dx by\psep
\dy=#4\unit\advance\dy-#2\unit\divide\dy by\psep
\multiply\dx by\dx\multiply\dy by\dy\advance\dx\dy\nd=1
\loop\ifnum\dx>0\advance\dx-\nd sp\advance\nd1\advance\dx-\nd sp\repeat}
\newcommand\dl[4]{{\setnd{#1}{#2}{#3}{#4}\dline{#1}{#2}{#3}{#4}\nd}}
\newcommand\dline[5]{{\nd=#5\hgt=#2\unit\dx=#3\unit\advance\dx-#1\unit
\divide\dx by\nd\dy=#4\unit\advance\dy-#2\unit\divide\dy by\nd
\rlap{\kern#1\unit\loop\ifnum\nd>1\advance\nd-1\advance\hgt\dy
\kern\dx\raise\hgt\copy\dotb\repeat}}}
\newcommand\ptlr[3]{\point{#1}{#2}{\raise-.4ex\rlap{$\ \,\scriptstyle{#3}$}}}
\newcommand\ptll[3]{\point{#1}{#2}{\raise-.4ex\llap{$\scriptstyle{#3}\ $}}}
\newcommand\pt[2]{\point{#1}{#2}{\vblob{\bullet}}}
\newcommand\pe[2]{\point{#1}{#2}{\vblob{\circ}}}
\newcommand\px[2]{\point{#1}{#2}{\tiny\vblob{\blacksquare}}}
\newcommand\thnline{\varline{600}{.4}}
\newcommand\medline{\varline{1200}{.3}}
\newcommand\dotline{\varline{600}{8}}
\title{Essential enhancements revisited}
\author{Paul Balister%
\thanks{Department of Mathematical Sciences, University of Memphis, Memphis TN 38152, USA.
E-mail: {pbalistr@memphis.edu}.}
\thanks{Research supported in part by NSF grant DMS-1301614.}
\and B\'ela Bollob\'as%
\thanks{Department of Pure Mathematics and Mathematical Statistics,
Wilberforce Road, Cambridge CB3 0WB, UK and
Department of Mathematical Sciences, University of Memphis, Memphis TN 38152, USA.
E-mail: {\tt b.bollobas@dpmms.cam.ac.uk}.}
\thanks{Research supported in part by EU MULTIPLEX grant 317532.}${\ }^\dag$
\and Oliver Riordan%
\thanks{Mathematical Institute, University of Oxford, Radcliffe Observatory Quarter, Woodstock Road, Oxford OX2 6GG, UK.
E-mail: {\tt riordan@maths.ox.ac.uk}.}}
\date{February 4, 2014}
\begin{document}

\maketitle

\begin{abstract}
In 1991 Aizenman and Grimmett claimed that any `essential enhancement' of
site or bond percolation on a lattice lowers the critical probability,
an important result with many implications, such as strict
inequalities between critical probabilities on suitable pairs of lattices.
Their proof has two parts, one probabilistic and one combinatorial. In this
paper we point out that a key combinatorial lemma, for which they provide only
a figure as proof, is false. We prove an alternative form of the lemma, and thus
the enhancement result, in the special cases of site percolation on the square, triangular
and cubic lattices, and for bond percolation on $\Z^d$, $d\ge 2$. The general case remains open,
even for site percolation on $\Z^d$, $d\ge 4$.
\end{abstract}

\section{Introduction}

In \emph{(independent) site percolation} on $\Z^d$, $d\ge 2$,
each site (vertex) of the lattice $\Z^d$ is taken to be \emph{open}
with probability $p$ and \emph{closed} otherwise, independently
of the others. Let $\omega$ be the resulting \emph{configuration},
or random set of open sites, and $\Omega=\cP(\Z^d)$ the set of possible configurations.
The basic question in percolation is when $\omega$ (or, more precisely, the subgraph
of $\Z^d$ induced by $\omega$) contains an infinite cluster (component).
By Kolmogorov's $0$/$1$-law, the probability of this event is $0$ or $1$
for any given $p$ and, following Broadbent and Hammersley~\cite{BH}, one defines
the \emph{critical probability} $\pc$ for site percolation on $\Z^d$
to be the infimum of the set of $p$ for which this probability is $1$.
The definition extends naturally to other lattices, and to \emph{bond percolation},
where it is the edges rather than the vertices that are open or closed.
For further background and definitions see~\cite{BRbook,Grimmett}.

Informally speaking, an enhancement of site percolation on $\Z^d$
is a local rule that `adds in' extra open sites depending on the existing
configuration, in a translation-invariant way.
Let $\delta$ be a distance on $\Z^d$, for example the
$\ell_\infty$ or the $\ell_1$ metric, and let $B_r=B_r(0)=\{v:\delta(0,v)\le r\}$
be the corresponding closed ball of radius $r$.
(When we wish to specify the $\ell_\infty$ or $\ell_1$ metric we write $B_r^\infty$
or $B_r^1$.)
An \emph{enhancement} is defined by a function
$\cE_0:\Omega\to\Omega$ that is \emph{local} in that

(i) there is some
$r$ such that $\cE_0(\omega)$ depends only on $\omega\cap B_r$, and

(ii) $\cE_0(\omega)$ is always finite.

\noindent
Increasing $r$ if necessary
we may assume in addition that

 (ii') $\cE_0(\omega)\subseteq B_r$.

\noindent
We say that the enhancement $\cE_0$ has \emph{range} $r$
if (i) and (ii') hold.

Let $\alpha$ be a second random configuration, obtained by including
each site with probability $s$ independently of the others and of $\omega$,
and define the \emph{enhanced configuration} to be
\begin{equation}\label{e:E}
 \cE(\omega,\alpha)
 :=\omega\cup\bigcup_{v\in\alpha} ( \cE_0(\omega-v)+v),
\end{equation}
where, as usual, $\omega\pm v$ denotes the translate of the configuration
$\omega$ through the vector $\pm v$.
The significance of $s$ (often taken to be $1$) and of $\alpha$ is that the enhancement
rule is only \emph{activated} at a given site with probability $s$.

A simple example of an enhancement would be to add into $\omega$
any site $v$ such that $v$ and its neighbours are all closed (i.e., not included
in $\omega$). Another would be to add into $\omega$ all the neighbours of any site
that is open but has no open neighbours.

A natural question is when an enhancement (with $s=1$, or $s>0$ fixed)
lowers the critical probability, although one must be a little careful with the definition,
since the rule $\cE_0(\omega)$ need not be monotone in $\omega$,
and so the (law of) $\cE(\omega,\alpha)$
need not be monotone in $p$.

Following Aizenman and Grimmett~\cite{AG} we call an enhancement \emph{essential}
if there is a configuration $\omega$ such that $\omega$ does not contain
a doubly infinite path, but $\cE(\omega,\{0\})=\omega\cup \cE_0(\omega)$
does. In other words, an enhancement is essential if it is \emph{possible}
that activating the rule at a single site can create a doubly infinite path.
For example, the second enhancement described above is essential, while the first
is not.
The definitions given so far extend to bond percolation and to other lattices in a natural way;
we omit the details since we shall focus mostly on site percolation on $\Z^d$.

Let $\theta(p,s)$ denote the probability that in the enhanced configuration $\cE(\omega,\alpha)$
the origin is in an infinite cluster, noting that
$\theta(p,s)>0$ if and only if $\cE(\omega,\alpha)$ contains an infinite cluster with probability $1$.
The main result of Aizenman and Grimmett~\cite{AG} may be stated as follows;
we discuss the reason for calling it a conjecture below.

\begin{conjecture}\label{cmain}
Let $\cE_0$ be an essential enhancement of site or bond percolation on
a lattice $\cL$ with critical probability $\pc>0$.
Then for any $s>0$ there is a $\pi(s)<\pc$ such that $\theta(p,s)>0$ for
all $p$ satisfying $\pi(s)<p<\pc$.
\end{conjecture}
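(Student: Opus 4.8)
The plan is to follow the two-part strategy of Aizenman and Grimmett~\cite{AG}, separating the argument into a combinatorial part and a probabilistic (differential-inequality) part, but being careful to use a correct version of the combinatorial lemma. The probabilistic part is essentially Russo-type: one couples the enhanced configuration at density $p$ with that at a slightly higher density $p+\mathrm{d}p$, or equivalently one differentiates $\theta(p,s)$ (suitably truncated to a large finite box to stay rigorous) with respect to $p$ and $s$. The key qualitative fact one wants is a differential inequality of the shape $\frac{\partial\theta_N}{\partial s}\le C\,\frac{\partial\theta_N}{\partial p}$ on a finite box of side $N$, uniformly in $N$; integrating this from $s=0$ (ordinary percolation, where $\theta=0$ throughout $p<\pc$) shows that if $\theta(p,s)=0$ for all $p$ in some interval $[\pi,\pc)$ then in fact $\theta\equiv 0$ there, and one then argues that $\theta(p,s)>0$ for $p$ just below $\pc$ by a contradiction with the sharpness/continuity input that $\theta$ for ordinary percolation is already positive at $\pc$ in the finite-box sense.

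The heart of the matter, and the step that actually needs the new work, is the combinatorial lemma that feeds the probabilistic argument. The rough content is: \emph{if} activating the enhancement at the origin is \emph{pivotal} for the event that the origin lies in a large cluster (or in a crossing of the box), \emph{then} with not-too-small conditional probability there is a nearby site whose ordinary openness is also pivotal for a related event, so that the ``$\partial/\partial s$'' contribution can be charged against a ``$\partial/\partial p$'' contribution. In \cite{AG} this is asserted with only a figure as justification, and the authors of the present paper observe it is false in general. So the plan is: first, isolate precisely what combinatorial statement the probabilistic argument really needs (a statement about local surgery near the activated site --- given an enhancement-pivotal configuration, one exhibits a bounded-size modification turning it into a configuration with an ordinary-open pivotal site, at a bounded cost in number of changed sites). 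Second, prove this surgery lemma in each of the cases where it \emph{can} be proved: site percolation on the square, triangular and cubic lattices, and bond percolation on $\Z^d$ for all $d\ge 2$. Here I would exploit the low dimension / special structure: in two dimensions one uses planar duality and the fact that a doubly infinite path has a geometrically constrained local picture, so the essential configuration $\omega$ witnessing essentiality can be spliced into a genuine pivotal configuration; for bond percolation on $\Z^d$ the relevant local picture is that of two edge-disjoint paths meeting the activated edge, which is rigid enough to control.

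The main obstacle I expect is exactly this surgery/combinatorial lemma, and specifically controlling it \emph{uniformly in the range $r$ of the enhancement}: the modification one makes near the activated site has to have size bounded in terms of $r$ only (not in terms of the cluster or the box), and one must ensure the modification does not accidentally destroy the global connection one is trying to preserve --- this is precisely where the na\"ive picture-based argument of \cite{AG} breaks down, because in high dimensions the local neighbourhood of a doubly infinite path need not ``look like'' a path. The remedy in the tractable cases is to use a metric/topological feature special to those lattices (planarity for $d=2$ site and the triangular lattice; for the cubic lattice and for bond percolation on $\Z^d$, a more hands-on case analysis of how two infinite paths can locally interact with the activated site or edge). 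For this reason the general case --- in particular site percolation on $\Z^d$ with $d\ge 4$ --- is left open, and the statement is phrased as a conjecture: the probabilistic machinery is robust, but a correct combinatorial lemma valid for all lattices and all ranges $r$ is not currently available.
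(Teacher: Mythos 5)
Your overall architecture (a Russo-type probabilistic part fed by a local surgery lemma, with the general statement left as a conjecture and only special lattices handled) matches the paper's, but the key quantitative step is stated in the wrong direction, and this is fatal as written. You propose the inequality $\frac{\partial\theta_N}{\partial s}\le C\,\frac{\partial\theta_N}{\partial p}$, supported by a lemma of the form ``if activating the enhancement at a site is pivotal, then a nearby site is pivotal in the ordinary sense''. That comparison only bounds the effect of the enhancement from \emph{above}; it is satisfied by every non-essential (indeed trivial) enhancement, for which the conclusion is false, so it cannot yield $\theta(p,s)>0$ for $p<\pc$, and essentiality never genuinely enters your argument. What is needed is the reverse: if a site $v$ is pivotal in the ordinary ($p$-) sense, then one can modify the configuration within a bounded distance of $v$ (bounded in terms of the range of $\cE_0$ only) so that a nearby site becomes pivotal for \emph{activation} (Claim~\ref{cl2}); this gives $\E N_s(L)\ge g(p,s)\,\E N_p(L)$, hence $\frac{\partial}{\partial s}\tau_L\ge g(p,s)\frac{\partial}{\partial p}\tau_L$, and one then integrates along curves in the $(p,s)$-plane, independent of $L$, that trade an increase in $s$ for a decrease in $p$, finishing with $\theta(p,0)>0$ for $p>\pc$. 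Essentiality is used precisely to build the finite gadget that is spliced in during the surgery: a configuration in a ball $B_m$ in which activating the enhancement at the centre is pivotal for joining $-e_m$ to $e_m$ (Lemma~\ref{inside}). Your endgame is also not viable: there is no ``positivity of ordinary percolation at $\pc$ in the finite-box sense'' to appeal to, and with your inequality the vanishing of $\theta(\cdot,0)$ below $\pc$ gives nothing.

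A second, smaller gap concerns the combinatorial core in the tractable cases. The difficulty is not resolved by planar duality or by a vague rigidity of ``two paths meeting the activated site''; what one actually needs is a precise rewiring statement for induced paths (Conjecture~\ref{cpath}): an induced path through the origin can be modified inside $B_r$ so that its intersection with $B_m$ is exactly the straight segment from $-e_m$ to $e_m$, while remaining induced (equivalently, keeping the two half-paths non-adjacent), after which one must still check that replacing the rest of the (possibly rich) configuration outside the ball does not create shortcuts — this is handled via the equivalence-class argument on $S_{r+1}(v)$ in the paper. The two-dimensional case is proved by explicit rewiring in an $\ell_1$ annulus (an octagon), and the three-dimensional case by a delicate analysis of paths entering the $\ell_1$-ball at or near corners; bond percolation is easy for the reason you indicate (paths only interfere by sharing vertices). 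Without the correct orientation of the pivotality comparison and a statement of this rewiring type, the proposal does not reach even the special cases.
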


Note that it is not entirely clear in~\cite{AG} what the scope of the claimed result
is; a formal result is stated only for site percolation on $\Z^d$, but later the authors say
that this restriction is not essential. They also mention that the choice of lattice is `to a large extent' irrelevant to their arguments.

\medskip
In this paper we have two aims. First, we shall describe in what way the proof of Conjecture~\ref{cmain}
given by Aizenman and Grimmett~\cite{AG} is incomplete. Second, we shall prove the following special case.

\begin{theorem}\label{thmain}
Conjecture~\ref{cmain} holds for site percolation on $\Z^2$ and on $\Z^3$.
\end{theorem}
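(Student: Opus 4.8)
The plan is to follow the two‑part strategy of Aizenman and Grimmett~\cite{AG}: a probabilistic part, which reduces the statement to a combinatorial ``rerouting'' assertion about the lattice and which goes through without change, and then a proof of that assertion for the two lattices in question. For the probabilistic part, write $A_n$ for the event that in $\cE(\omega,\alpha)$ the origin is joined to the sphere of radius $n$, and $\theta_n(p,s)=\Prb(A_n)$, so $\theta_n\downarrow\theta$. Since $\cE(\omega,\alpha)$ is increasing in $\alpha$, Russo's formula gives $\partial\theta_n/\partial s=\sum_v\Prb(v\text{ pivotal for }A_n\text{ in }\alpha)$; and although $\cE(\omega,\alpha)$ need not be monotone in $\omega$, differentiating still gives $\partial\theta_n/\partial p=\sum_u\bigl(\Prb(u\text{ is }{+}\text{-pivotal})-\Prb(u\text{ is }{-}\text{-pivotal})\bigr)\le\sum_u\Prb(u\text{ is }{+}\text{-pivotal})$, where $u$ is $+$-pivotal if $A_n$ holds with $u$ forced open but not with $u$ forced closed. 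The crux is the differential inequality
\[
 \frac{\partial\theta_n}{\partial p}\le C\,\frac{\partial\theta_n}{\partial s},
\]
with $C$ bounded on compact subsets of $(0,1)^2$: granting it, $\theta_n$ is non‑decreasing along each segment $t\mapsto(\pc+\eps-t/C,t)$, so $\theta_n(\pc+\eps-s/C,s)\ge\theta_n(\pc+\eps,0)=\Prb_{\pc+\eps}(0\leftrightarrow\partial B_n)$; letting $n\to\infty$ and then $\eps\to0$ gives $\theta(p,s)>0$ whenever $\pc-s/C<p<\pc$, which is the theorem with $\pi(s)=\pc-s/C$. (Here $\pc$ denotes the ordinary critical probability, which lies strictly between $0$ and $1$ for site percolation on $\Z^2$ and $\Z^3$; only positivity of $\theta_{\mathrm{ord}}$ just above $\pc$ is used.)

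The displayed inequality will be deduced from the \emph{rerouting lemma}, which is where the choice of lattice matters: \emph{there is a constant $\rho$ so that whenever $u$ is $+$-pivotal for $A_n$ in $(\omega,\alpha)$, there are a site $v\in B_\rho(u)$ and a configuration $\omega'$ with $\omega'=\omega$ off $B_\rho(u)$, such that $v$ and $\omega'|_{B_\rho(u)}$ are determined by $u$ and by $(\omega,\alpha)|_{B_\rho(u)}$, and $v$ is pivotal for $A_n$ in $(\omega',\alpha)$.} Given this, condition on $(\omega,\alpha)|_{B_\rho(u)}$: forcing a bounded block into a prescribed state costs only a factor $[\min(p,1-p,s,1-s)]^{-O(|B_\rho|)}$, so the $+$-pivotality of $u$ in $\omega$ is dominated by the $\alpha$-pivotality of $v$ in $\omega'$, and since each $v$ is charged by only $O(|B_\rho|\,4^{|B_\rho|})$ pairs (a choice of $u\in B_\rho(v)$ and of local data) one obtains $\sum_u\Prb(u\text{ is }{+}\text{-pivotal})\le C\sum_v\Prb(v\text{ pivotal in }\alpha)$, as needed.

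To prove the rerouting lemma I would first extract from essentiality a finite \emph{gadget}: a configuration $F$ supported in a ball $B_{\rho_0}$, a site $v_0$ and two sites $a_1,a_2$, with $a_1\not\leftrightarrow a_2$ in $F$ but $a_1\leftrightarrow a_2$ in $\cE(F,\{v_0\})=F\cup(\cE_0(F-v_0)+v_0)$. Indeed, from $\omega_0$ containing no doubly infinite path but with one, say $\Gamma$, in $\omega_0\cup\cE_0(\omega_0)$, parametrise $\Gamma$ and let $M$ be a finite sub‑arc long enough to contain all of $\cE_0(\omega_0)$; put $F=(M\cap\omega_0)\cup(\omega_0\cap B_r(0))$, $v_0=0$, and let $a_1,a_2$ be the ends of $M$. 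Then $F$ and $\omega_0$ agree on $B_r(0)$, so $\cE(F,\{0\})\supseteq M$ and hence $a_1\leftrightarrow a_2$ in $\cE(F,\{0\})$; while a connection $a_1\leftrightarrow a_2$ \emph{within} $F\subseteq\omega_0$, spliced to the two disjoint infinite tails of $\Gamma$ (both lying in $\omega_0$), would yield a doubly infinite path in $\omega_0$, a contradiction. Now take a $+$-pivotal $u$. Opening $u$ alters $\cE(\omega,\alpha)$ only inside a bounded ball about $u$, and its net local effect is to add some sites which merge two hitherto distinct ``local clusters'' of $\cE(\omega^{u\to0},\alpha)$ inside $B_\rho(u)$, each of which reaches $\partial B_\rho(u)$ (or $\partial B_n$); this whole local picture is visible in $(\omega,\alpha)|_{B_\rho(u)}$. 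One then defines $\omega'$ inside $B_\rho(u)$ by deleting most of $\omega$, implanting a translate of the gadget near $u$, and running two disjoint open paths from $a_1$ and $a_2$ out to $\partial B_\rho(u)$, so placed that after activation of $v$ (the image of $v_0$) the two paths together with the gadget realise exactly the connection that opening $u$ realised. Without activation $a_1\not\leftrightarrow a_2$, so $B_\rho(u)$ cannot be crossed and $A_n$ fails, as with $u$ closed; with activation the crossing is restored; so $v$ is pivotal, as required.

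The entire difficulty lies in that last construction, and this is exactly where the proof in~\cite{AG} --- a single figure --- is inadequate. The two paths must be laid inside the \emph{fixed}-radius ball $B_\rho(u)$, disjoint from each other and from whatever part of $\omega$ cannot be deleted without disconnecting the infinite cluster; and, most delicately, the local data does not tell us which boundary points of $B_\rho(u)$ lie in the origin's cluster and which in the cluster of $\partial B_n$, so the paths' endpoints must be chosen to work in every compatible global environment. For $\Z^2$ I would use planarity: the complement of $B_\rho(u)$ is essentially an annulus, so the two global clusters meet the circle $\partial B_\rho(u)$ in a controlled cyclic pattern, the local bottleneck can be brought to a canonical form (e.g.\ via a lowest‑path argument), and two disjoint paths with the required endpoints can always be drawn in a disc with one hole. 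For $\Z^3$ a genuinely three‑dimensional argument is needed, controlling instead how the relevant closed sites separate the $2$-sphere $\partial B_\rho(u)$, rather than relying on a cyclic order; this is the technical core of the paper, and I expect it to absorb essentially all of the work. The same routing problem on $\Z^d$ for $d\ge4$, where the boundary of the ball is a $(d{-}1)$-sphere, is exactly what remains open.
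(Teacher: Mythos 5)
Your overall architecture matches the paper's: the Aizenman--Grimmett differential inequality reduces everything to a purely combinatorial statement (your ``rerouting lemma'' is the paper's Claim~\ref{cl1}/Claim~\ref{cl2}), your gadget extraction from essentiality is the paper's Lemma~\ref{inside}, and the paper likewise reduces the rerouting claim to a statement about modifying a single induced path inside a ball (Conjecture~\ref{cpath}), then handles the complications you mention (sites of $\omega$ outside the ball, pivotal sites near $0$ or near $S_L$) by a deletion-to-an-induced-shortest-path step and an equivalence-class argument on $S_{r+1}(v)$. So up to the point where the real work begins, your plan is sound and essentially the paper's.

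The genuine gap is that you never prove the rerouting statement for $\Z^2$ or $\Z^3$ --- and that statement \emph{is} the content of Theorem~\ref{thmain}; everything before it was already correct in~\cite{AG}. For $\Z^3$ you say outright that a genuinely three-dimensional argument ``is the technical core'' and defer it, which is precisely what remains to be done: the paper does it via the $\ell_1$-ball, two preparatory lemmas (Lemma~\ref{3far}, Lemma~\ref{3step}) that resolve all cases except when one path first meets $S_r$ at a corner of the octahedron with the other diagonally adjacent, and then Claim~\ref{cldiag}, a delicate induction showing the two paths must then alternate along an edge of the octahedron, after which an explicit escape move finishes. For $\Z^2$ your planarity sketch is at the same level of detail as the figure-proof the paper shows to be inadequate: in \emph{site} percolation ``two disjoint paths in an annulus'' is not enough, since the red and green paths must be non-\emph{adjacent}, and the failure mode is exactly the corner-entry configurations (the paper's Figures~\ref{f:1} and~\ref{f:2}); the paper's $d=2$ proof works in the octagon $O_r$ and resolves the corner/diagonally-adjacent case by exploiting that the offending path must have arrived from, and continued to, uniquely determined neighbours. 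Without these constructions your proposal restates the problem rather than solving it, so it does not establish the theorem for either lattice.
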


We shall also outline a proof of the following much easier result, without giving full details.

\begin{theorem}\label{thextra}
Conjecture~\ref{cmain} holds (\emph{mutatis mutandis}) for bond percolation on $\Z^d$, $d\ge 2$,
and for site percolation on the (two-dimensional) triangular lattice.
\end{theorem}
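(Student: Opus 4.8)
The plan is to run the Aizenman--Grimmett strategy in exactly the corrected form used to prove Theorem~\ref{thmain}. That argument has a probabilistic half and a combinatorial half. The probabilistic half — expressing, for the enhanced configuration $\cE(\omega,\alpha)$, the derivatives $\partial\theta/\partial p$ and $\partial\theta/\partial s$ (in the appropriate sprinkled/integrated sense, since $\theta$ need not be differentiable) as sums over sites (resp.\ edges) of probabilities of pivotality events, and then using finite energy to absorb the bounded cost of a local modification — is lattice-independent, so it carries over verbatim to bond percolation on $\Z^d$ and to site percolation on the triangular lattice, with edges in place of sites throughout in the bond case. What has to be supplied separately for each lattice is the combinatorial input: a bounded ``gadget'', built from an arbitrary essential enhancement $\cE_0$, that can be spliced into a configuration at (a translate of) a site/edge which is pivotal for the enhanced cluster of the origin, so as to produce a configuration which still percolates but in which $\cE_0$ is itself pivotal at some nearby site. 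I would isolate this as a lemma in the same shape as the one established for $\Z^2$ and $\Z^3$, and then simply verify that lemma in the two easy settings.

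For both settings the verification is short. The essential hypothesis hands us a finite configuration $\omega^\ast\subseteq B_r$ with no doubly infinite open path such that $\omega^\ast\cup\cE_0(\omega^\ast)$ contains a doubly infinite open path $P$; since $P$ must use a site of $\cE_0(\omega^\ast)\setminus\omega^\ast$, deactivating the origin destroys $P$. The only delicate point in turning $P$ into a splice-able gadget is that the two semi-infinite ends of $P$ must be led out to the boundary of a large box as two \emph{disjoint} rays exiting at controlled vertices, so that the gadget provides a pivotal connection between two prescribed boundary vertices and so that translated copies placed along a line do not interfere; this ``straightening at infinity'' is precisely the step that fails in general. For bond percolation on $\Z^d$ it is immediate: each end of $P$ is carried by a self-avoiding walk, and from its first vertex outside $B_{2r}$ we may re-route it, along edges we declare open, straight out to infinity in a coordinate direction assigned so that the two ends, and $\omega^\ast$, are pairwise disjoint — in bond percolation we are free to choose which edges are open, so no collision can be forced. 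For site percolation on the triangular lattice, the lattice is self-matching, which makes the structure of such a witness transparent — the enhancement joins two open arms lying in the plane — and two arms in two dimensions can be straightened, and kept disjoint from each other and from their translates along a suitable line, by an elementary planar argument; it is exactly the mismatch between $\Z^2$ and its matching lattice that makes the square case require the longer argument of Theorem~\ref{thmain}. In each case one obtains the gadget, and the lemma follows.

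With the lemma in hand the proof finishes as for Theorem~\ref{thmain}: insert a translated copy of the gadget at each pivotal site/edge, use finite energy to bound the probability of the resulting ``$\cE_0$-pivotal'' event below by a constant (depending only on $\cE_0$ and $r$) times the probability of the original pivotality event, sum, obtain a differential inequality of the form $\partial\theta/\partial s\ge c\,(1-p)\,\partial\theta/\partial p$ on $\{0<p<\pc,\ s>0\}$, and integrate to conclude that $\theta(p,s)>0$ for $p$ in a left-neighbourhood of $\pc$ whenever $s>0$. I expect the only genuine work — and the reason this is a separate, easier theorem rather than part of Theorem~\ref{thmain} — to be the straightening/disjointness step above: it is trivial here because bond percolation supplies spare open edges and the triangular lattice supplies planarity together with self-matching, whereas site percolation on $\Z^d$ with $d\ge 4$ offers neither, which is why the general case stays open.
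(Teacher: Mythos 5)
Your proposal misidentifies where the combinatorial work lies, and as a result the step that actually needs proving is missing. The construction you concentrate on --- taking the witness of essentiality and leading the two ends of its doubly infinite path out to the boundary of a large box at controlled, well-separated vertices --- is the analogue of Lemma~\ref{inside}, and that lemma is essentially immediate in \emph{every} setting, including site percolation on $\Z^d$ for all $d$ (it is proved in Section~\ref{sec_inside}); it is emphatically not ``the step that fails in general''. (Incidentally, your formulation of essentiality is already inconsistent: a finite $\omega^\ast\subseteq B_r$ cannot contain a doubly infinite open path; the witness configuration is infinite, and only its intersection with $B_r$ is relevant to $\cE_0$.) What actually fails in Aizenman--Grimmett, and what must be supplied for each lattice, is the analogue of Conjecture~\ref{cpath}: given the ambient induced path $P$ from $0$ to $S_L$ through the pivotal site/edge $v$ --- a path over which we have no control, whose two halves may enter the ball $B_r(v)$ at awkward places --- one must rewire $P$ inside $B_r(v)$ so that inside $B_m(v)$ it runs straight through the gadget terminals $\pm e_m$, while keeping the two halves non-adjacent (in the site case) so that no connection bypassing the enhancement-pivotal site is created. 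Your proposal never engages with this splicing step; it treats it as automatic once the gadget exists, which is exactly the error in the original paper.

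For bond percolation the splicing step is indeed essentially trivial, but for a reason you only gesture at and apply to the wrong object: truncate the two halves of $P$ at their first entry edges into the ball, delete all bonds inside and incident to the ball except those two edges, and then, since in bond percolation an unwanted connection requires a shared \emph{vertex} rather than mere adjacency, use $2$-connectivity of the boundary sphere to join the two entry vertices to $\pm e_r$ by vertex-disjoint paths; your remark that ``no collision can be forced'' is the right intuition, but you invoke it only for the gadget's internal rays, which need no such care. For the triangular lattice you give no argument at all for the rewiring: the proof there shows that the entry points of the two halves of $P$ are at graph distance at least $2$ on the hexagonal sphere $S_r$ and routes them around the cycle $S_{r-1}$ to opposite points and thence to $\pm e_{r-3}$; appeals to self-matching or to planarity of the witness do not substitute for this. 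Relatedly, your closing diagnosis is off: the reason $d\ge 4$ remains open is that Conjecture~\ref{cpath} is unproved there, not any difficulty in straightening the enhancement witness.
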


We believe Conjecture~\ref{cmain} very strongly for site percolation on $\Z^d$, $d\ge 4$,
and slightly less strongly for arbitrary lattices; it does not
seem inconceivable that there is a pathological counterexample.

The rest of the paper is organized as follows. In the next section
we outline the argument given by Aizenman and Grimmett~\cite{AG},
and describe the problem with it. In Section~\ref{sec_closing}
we formulate a condition (Conjecture~\ref{cpath}) that
would imply Conjecture~\ref{cmain}. In Section~\ref{sec_inside}
we give (for completeness) the proof of a non-problematic
lemma from~\cite{AG}. In Section~\ref{sec_Z2Z3} we
prove Theorem~\ref{thmain} by proving the corresponding
cases of Conjecture~\ref{cpath}. Finally, we briefly discuss
Theorem~\ref{thextra} in Section~\ref{sec_further}.

\section{The Aizenman--Grimmett argument}\label{sec_AG}

The argument for Conjecture~\ref{cmain} given in~\cite{AG} has
two parts: a probabilistic part that is correct and (reasonably)
complete, and a combinatorial part that is neither. We describe
both parts, starting with an outline of the probabilistic part.

For $L\ge 1$ let $S_L=S_L(0)=\{v:\delta(0,v)=L\}$ be the \emph{sphere} of radius $r$
centred at the origin;
if we wish to specify the metric we write $S^1_L$ or $S^\infty_L$.
Let $\cR_L$ be the event (subset of $\Omega$, measurable
with respect to the standard product $\sigma$-algebra) that there is an (open) path connecting the
origin to $S_L$. Finally, let
$\tau_L(p,s)$ be the probability that $\cE(\omega,\alpha)\in \cR_L$,
i.e., that the enhanced configuration contains a path from $0$ to $S_L$.
Note that
\[
 \theta(p,s)=\lim_{L\to\infty} \tau_L(p,s).
\]

Let us call a site $v$ {\em $p$-pivotal\/} (for the event $\cR_L$, and
with respect to the state $(\omega,\alpha)$)
if $\cE(\omega\cup\{v\},\alpha)$ contains a path from 0 to $S_L$ but
$\cE(\omega\setminus\{v\},\alpha)$ does not.
We say that $v$ is {\em $s$-pivotal\/} if
$\cE(\omega,\alpha\cup\{v\})$ contains a path from 0 to $S_L$ but
$\cE(\omega,\alpha\setminus\{v\})$ does not. In~\cite{AG}, the
terms \emph{$(n+)$pivotal} and \emph{$(a+)$pivotal} are used.
Note that a site can be pivotal in another sense (\emph{$(n-)$pivotal}
in~\cite{AG}) in that deleting it from $\omega$ causes $\cR_L$
to hold; we will not need to consider this case.
Let
\[
 N_p(L) = N_p(L,\omega,\alpha) = \big|\{v: v\hbox{ is $p$-pivotal }\}\big|
\]
and
\[
 N_s(L) = N_s(L,\omega,\alpha) = \big|\{v: v\hbox{ is $s$-pivotal }\}\big|
\]
be the numbers of $p$-pivotal and $s$-pivotal sites, respectively.

The argument in \cite{AG} is based on the following idea.
Writing $\Prb_{p,s}$ for the probability measure on $\Omega\times\Omega$
defined above, and $\E_{p,s}$ for the corresponding expectation,
suppose there exist an integer $L_0$ and a continuous, strictly
positive function $g(p,s)$ on $(0,1)^2$ such that for
any $L\ge L_0$ and $(p,s)\in (0,1)^2$ we have
\begin{equation}\label{EE}
 \E_{p,s} N_s(L) \ge g(p,s) \E_{p,s} N_p(L).
\end{equation}
Then, using a version of the Margulis--Russo formula, it is easy
to see that
\begin{equation}\label{pdineq}
 \frac{\partial}{\partial s} \tau_L(p,s) \ge g(p,s)\frac{\partial}{\partial p} \tau_L(p,s).
\end{equation}
Aizenman and Grimmett take the limit to transfer this inequality
to one for $\theta(p,s)$. More precisely, \eqref{pdineq} shows
that $\tau_L(p,s)$ is weakly decreasing as $s$ decreases and $p$ increases
along certain specific curves that do not depend on $L$.
This implies that $\theta(p,s)$ is monotone along the same curves; then the fact that
$\theta(p,0)>0$ for $p>\pc$ gives the result.

To establish \eqref{EE} it suffices to prove the following statement.
\begin{claim}\label{cl1}
Let $\cE_0$ be an essential enhancement.
There are constants $L_0$ and $R$, depending only on $\cE_0$,
such that whenever $L\ge L_0$ and a site $v$ is $p$-pivotal for $\cR_L$ in the
state $(\omega,\alpha)$, then there is a state $(\omega',\alpha')$
differing from $(\omega,\alpha)$ only within distance $R$ of $v$
such that some site $w$ within distance $R$ of $v$ is $s$-pivotal for $\cR_L$
in the state $(\omega',\alpha')$.
\end{claim}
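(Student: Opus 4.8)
The plan is to make the word ``essential'' do literal work: at a $p$-pivotal site $v$ we discard the configuration in a bounded neighbourhood and replace it by a translate of a fixed finite ``gadget'' extracted from a witness for essentiality, wired so that the single activation site $w:=v$ plays, through the enhancement rule, exactly the role that $v$ was playing directly.

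\emph{Building the gadget.} Fix a witness $\omega_*$ for $\cE_0$, so $\omega_*$ contains no doubly infinite path but $\omega_*\cup\cE_0(\omega_*)$ contains one, say $P$. Since $\cE_0(\omega_*)\subseteq B_r$ is finite, $P$ must use a vertex of $\cE_0(\omega_*)\setminus\omega_*$; deleting the finitely many such vertices splits $P$ into two rays $P_L,P_R\subseteq\omega_*$ together with finitely many finite $\omega_*$-pieces. Crucially, $P_L$ and $P_R$ lie in \emph{different} components of $\omega_*$: a path in $\omega_*$ joining them would, together with the two rays, form a connected subgraph containing two disjoint rays, hence contain a doubly infinite path in $\omega_*$---a contradiction. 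Truncate $P$ to a long finite subpath $\tilde P$ with endpoints $\xi_1\in P_L$ and $\xi_2\in P_R$, contained in a ball $B_{\rho_0}$ (with $\rho_0=\rho_0(\cE_0)$), and put $G:=(\omega_*\cap B_r)\cup(\tilde P\cap\omega_*)$. Then $G$ is a fixed finite configuration with two properties, which are the only consequences of essentiality we shall use: (a) $G\cap B_r=\omega_*\cap B_r$, so activating the rule at the centre produces $\cE_0(\omega_*)$, and $G\cup\cE_0(\omega_*)\supseteq\tilde P$ joins $\xi_1$ to $\xi_2$; (b) $G\subseteq\omega_*$, so $\xi_1$ and $\xi_2$ lie in different components of $G$.

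\emph{The splice.} Suppose $v$ is $p$-pivotal for $\cR_L$ in $(\omega,\alpha)$; then $\cE(\omega\cup\{v\},\alpha)$ contains a $0$--$S_L$ path $\gamma$ while $\cE(\omega\setminus\{v\},\alpha)$ contains none, and these two configurations differ only inside $B_{2r}(v)$, so $\gamma$ meets $B_{2r}(v)$. Fix a large constant $M=M(\cE_0)$. Cutting $\gamma$ at a sphere of radius $\approx M$ around $v$ produces a ``$0$-arm'' from $0$ to a vertex $u_1$ and an ``$S_L$-arm'' from $S_L$ to a vertex $u_2$, each lying (after a routine but slightly delicate choice of the cutting radius) in the region where none of the configurations we consider below can differ, and so surviving in every such configuration; the cases in which $v$ is within $M$ of $0$ or of $S_L$ are handled by minor variants, $L_0$ being chosen large enough that at most one of these occurs. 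Now let $(\omega',\alpha')$ agree with $(\omega,\alpha)$ outside $B_M(v)$; inside, deactivate all of $\alpha$, place the translate $G+v$ (so that $\omega'\cap B_r(v)$ is the pattern that makes the rule fire at $v$), join $u_1$ to $\xi_1+v$ and $u_2$ to $\xi_2+v$ by two vertex-disjoint, mutually non-adjacent ``channel'' paths running through the annulus $B_M(v)\setminus B_{\rho_0}(v)$ and avoiding the wrong components of $G+v$, and close everything else. With the rule active at $v$, concatenating the $0$-arm, a channel, $\tilde P+v$, a channel and the $S_L$-arm gives a $0$--$S_L$ path, so $\cE(\omega',\alpha'\cup\{v\})\in\cR_L$. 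With it inactive, $\cE(\omega',\alpha'\setminus\{v\})$ coincides with $\cE(\omega\setminus\{v\},\alpha)$ outside $B_M(v)$ (which has no $0$--$S_L$ path), while inside $B_M(v)$ the only open sites meeting $\partial B_M(v)$ are $u_1$ and $u_2$, and these are joined within $B_M(v)$ only through $\xi_1+v$ and $\xi_2+v$, which lie in different components of $G$; hence there is no $0$--$S_L$ path at all. Thus $w:=v$ is $s$-pivotal in $(\omega',\alpha')$, and one may take $R:=M$.

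\emph{The main obstacle.} The genuinely hard point---the one Aizenman and Grimmett left to a figure---is hidden in the phrase ``join $\ldots$ by two channel paths avoiding the wrong components of $G+v$'', together with the need to control the enhancement ``leakage'' into $B_M(v)$ coming from activation sites just outside it: if a channel brushes against the component of $\xi_2+v$ (or against leaked open sites connected to the far configuration), the inactive configuration acquires a spurious $0$--$S_L$ path and the argument fails. Making this rigorous amounts to a combinatorial lemma describing how the components of a witness can sit inside a ball and be separated by a surface through which the two channels are threaded in the right homotopy class. I would expect such a lemma to be provable by a direct topological or planar-duality argument for site percolation on $\Z^2$ and $\Z^3$, and for bond percolation on $\Z^d$ (where the objects to be separated are edges, not vertices), but to be exactly the statement for which no general proof is available---so that for site percolation on $\Z^d$, $d\ge4$, the claim would remain conjectural.
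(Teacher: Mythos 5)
Your outline reproduces, in essence, the Aizenman--Grimmett strategy that the paper itself follows: extract a finite gadget from an essentiality witness, plug a translate of it into a neighbourhood of the pivotal site $v$, and re-route the two surviving arms of the pivotal path onto the gadget's two terminals without creating a red--green shortcut. But the step you defer --- ``join $u_1$ to $\xi_1+v$ and $u_2$ to $\xi_2+v$ by two vertex-disjoint, mutually non-adjacent channel paths avoiding the wrong components'' --- is not a technicality one may ``expect to be provable''; it is precisely the combinatorial statement whose failure in the form claimed by Aizenman and Grimmett is the point of the paper, formulated there as Conjecture~\ref{cpath}, and establishing it even for $d=2$ and $d=3$ occupies all of Section~\ref{sec_Z2Z3} (Lemmas~\ref{3far} and~\ref{3step}, and the corner analysis of Claim~\ref{cldiag}, carried out in the $\ell_1$ ball exactly because the naive $\ell_\infty$ version breaks at corners and edges). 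Since you prove nothing about the threading, your argument does not yield Claim~\ref{cl1} in any case, not even on $\Z^2$. Moreover, your gadget makes the threading strictly harder than it needs to be: $G=(\omega_*\cap B_r)\cup(\tilde P\cap\omega_*)$ may have many components, several of which can reach the boundary of $B_{\rho_0}$ near $\xi_1$ and $\xi_2$, whereas the paper's Lemma~\ref{inside} constructs a gadget $\omega$ with $\omega\cap S_m=\{e_m,-e_m\}$, so that the only gadget sites the channels can ever touch are the two prescribed antipodal terminals.

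There are also genuine gaps in what you label routine. First, you never clear the activation sites near $v$ before rewriting $\omega$: deactivating $\alpha$ inside $B_M(v)$ does nothing about points of $\alpha$ just outside $B_M(v)$, whose windows of radius $r$ overlap the region you change; in the new state their outputs can differ, which can both destroy your arms and create fresh open sites within distance $2r$ of $\partial B_M(v)$ bridging the $0$-arm to the $S_L$-arm. The paper removes this problem by the reduction to Claim~\ref{cl2}: delete the points of $\alpha\cap B_R(v)$ one by one, so that either some deleted activation site is itself $s$-pivotal, or one reaches a state with no activation within $R=3r+100$ of $v$, after which all modifications are confined to $B_{r+2}(v)$, out of reach of every remaining activation window. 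Second, even granting that only $u_1,u_2$ are exposed on $\partial B_M(v)$, open sites of the enhanced configuration outside $B_M(v)$ that are not on $\gamma$ may connect your two arms to each other once the channels are attached; ruling this out is not automatic, and the paper needs the pruning of $\omega\cap B_{r+2}(v)$ to a minimal set, the choice of a shortest (hence induced) path, and the equivalence-class argument on $S_{r+1}(v)$ (the classes $C_0,\dots,C_k$) to show the outside configuration connects boundary sites exactly as the induced path does. These points, together with the near-$0$ and near-$S_L$ cases you dismiss as minor variants (the latter requires an explicit construction of the auxiliary induced path $P'$ in the paper), are what your ``routine but slightly delicate'' clauses would have to become before the splice argument is a proof.
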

In other words, if $v$ is $p$-pivotal, then it is possible to modify
the configurations $\omega$ and $\alpha$ within a fixed distance of $v$
to make some site near $v$ (usually this can be $v$ itself) $s$-pivotal.
Indeed, such a modification procedure gives a finite-to-one map from $\Omega\times\Omega$
to itself; considering this map, and taking expectations, it is easy to deduce \eqref{EE}.

Note that Claim~\ref{cl1} is a purely combinatorial (graph theoretic) statement
about subgraphs of $\Z^d$ and whether they do or do not contain paths with
certain properties; the probability measure $\Prb_{p,s}$ does not appear.

Aizenman and Grimmett observe that in proving Claim~\ref{cl1}, we may
assume that enhancements near $v$ are already deactivated, i.e.,
it suffices to prove the following modified claim.
\begin{claim}\label{cl2}
Let $\cE_0$ be an essential enhancement. 
There are constants $L_0$ and $R$, depending only on $\cE_0$,
such that whenever $L\ge L_0$ and a site $v$ is $p$-pivotal for $\cR_L$ in the
state $(\omega,\alpha)$, and $\alpha\cap B_R(v)=\emptyset$,
then there is a state $(\omega',\alpha')$
differing from $(\omega,\alpha)$ only within distance $R$ of $v$
such that some site $w$ within distance $R$ of $v$ is $s$-pivotal for $\cR_L$
in the state $(\omega',\alpha')$.
\end{claim}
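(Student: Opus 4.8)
The plan is to strip away the probability and the randomness in $\alpha$, reduce Claim~\ref{cl2} to a statement about a single fixed configuration, and then carry out a gluing argument near $v$. Fix $R$ large compared with the range $r$ of $\cE_0$. Since $\alpha\cap B_R(v)=\emptyset$, for any $\omega$ the enhanced configuration $\xi:=\cE(\omega,\alpha)$ contains no enhancement-added site within distance $R-r$ of $v$ and agrees with $\omega$ there, and toggling $v$ in $\omega$ toggles $v$ in $\xi$ and changes nothing else. Hence $v$ is $p$-pivotal for $\cR_L$ in $(\omega,\alpha)$ if and only if $\xi\cup\{v\}\in\cR_L$ while $\xi\setminus\{v\}\notin\cR_L$, i.e.\ $v$ is a pivotal site for $\cR_L$ in the \emph{fixed} configuration $\xi$. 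A standard argument on pivotal sites then produces two \emph{disjoint} open arcs in $\xi\setminus\{v\}$: one, $P_0$, from $0$ to a neighbour $v_1$ of $v$, and one, $P_\infty$, from a neighbour $v_2$ of $v$ to $S_L$, together with the fact that no open path joins $0$ to $S_L$ in $\xi\setminus\{v\}$. (A few degenerate configurations of $v$, $0$ and $S_L$ — essentially, $v$ too close to $0$ or to $S_L$ — must be treated separately; this is part of the role of $L_0$.)

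Next I would unpack the word \emph{essential}. By definition there is a configuration $\psi$ containing no doubly infinite open path, but with $\psi\cup\cE_0(\psi)$ containing one, say $\pi$. As $\cE_0(\psi)$ is finite and lies in a fixed ball, $\pi$ is built from two infinite open rays lying in $\psi$, joined by a finite ``bridge'' that meets $\cE_0(\psi)$ and is absent in $\psi$ alone. Moreover these two rays lie in \emph{distinct} open clusters of $\psi$: otherwise that cluster would be a locally finite connected graph containing two disjoint rays, hence a doubly infinite path, contradicting the choice of $\psi$. So activating $\cE_0$ at the origin, in $\psi$, does nothing except bridge a gap between two otherwise separated open rays.

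The construction then essentially writes itself. After a routine choice of radii that absorbs $r$ into $R$, I would modify $(\omega,\alpha)$ only inside a ball $B'$ of radius $R-r$ about $v$, so that outside $B'$ the configuration, and hence its enhanced version, coincides with $\xi$. Inside $B'$: place a translated (and, if the lattice permits, rotated or reflected) copy of a large finite piece of $\psi$, with the origin of $\psi$ at a site $w$ near $v$, so that this copy contains the bridge region and finite initial segments of the two rays, the outer ends of those segments lying on a small sphere about $w$; take the new $\alpha$ empty in $B'$; make everything in $B'$ closed except this copy and two ``connector'' arcs; and route the connectors so that one joins the outer end of the first ray to the point where $P_0$ enters $B'$, the other joining the outer end of the second ray to the point where $P_\infty$ enters $B'$. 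If $w$ is activated, $\cE_0$ fires at $w$, produces the bridge, and concatenating $P_0$, the first connector, the first ray, the bridge, the second ray, the second connector and $P_\infty$ yields an open path from $0$ to $S_L$. If $w$ is not activated, the bridge is absent; and since the two rays lie in different open clusters, the connectors are disjoint, and outside $B'$ nothing connects $0$ to $S_L$ avoiding $v$, there is no open path from $0$ to $S_L$. Thus $w$ is $s$-pivotal in $(\omega',\alpha')$, which is what Claim~\ref{cl2} asks for.

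The word to worry about is \emph{route}, and this is where I expect the real difficulty to lie. We control neither where $P_0$ and $P_\infty$ enter $B'$ nor the directions in which the rays of $\psi$ must emerge — we may translate, rotate or reflect the copy of $\psi$ and choose the truncation radius, but nothing more — and the two connectors must be disjoint and run next to no open site of $\xi$ other than at their prescribed endpoints; a connector forced to brush a ``wrong-side'' open cluster of $\xi$ would short-circuit $0$ to $S_L$ in the deactivated state, and in $\Z^2$ the two connectors might even be forced to cross. Making this work requires understanding how the open cluster of $0$ and the open cluster of $S_L$ in $\xi\setminus\{v\}$ can interleave near $v$, and it is precisely at this point that the figure offered by Aizenman and Grimmett is not a proof; indeed the combinatorial lemma it is meant to support is false in general. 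So I would not attack the general case head-on. Instead I would isolate the required combinatorial content as a statement purely about paths in the lattice (the part played by Conjecture~\ref{cpath}), and prove that statement for $\Z^2$ and $\Z^3$: in $\Z^2$, planarity and the Jordan curve theorem force the two clusters to approach $v$ in a compatible cyclic order and leave a clean corridor for the connectors, while $\Z^3$ needs a more intricate topological argument; for $\Z^d$ with $d\ge 4$ I see no way to rule out a pathological $\xi$, so I would leave that case open.
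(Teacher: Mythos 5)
Your overall architecture is the right one, and it matches the paper's: reduce to a deterministic statement about a fixed configuration near $v$, extract from essentialness a finite gadget that an activation at a single site can ``bridge'' (this is Lemma~\ref{inside}), and then reduce everything to a purely combinatorial routing statement about rewiring the two arcs of the path near $v$ inside a ball (this is Conjecture~\ref{cpath}). But the proposal does not prove that routing statement, and the routing statement \emph{is} the theorem: it is exactly the point where the Aizenman--Grimmett figure fails, and it is where essentially all of the work in the paper lies. Saying that in $\Z^2$ ``planarity and the Jordan curve theorem force the two clusters to approach $v$ in a compatible cyclic order and leave a clean corridor'' is not an argument --- the genuine obstruction is local and discrete (a path entering the ball at a corner, with the other colour occupying the only escape sites), and the paper's $d=2$ proof is a case analysis in the $\ell_1$ octagon built around precisely these corner configurations; for $d=3$ the paper needs Lemmas~\ref{3far} and~\ref{3step} and the rather delicate Claim~\ref{cldiag}, and nothing like a soft ``topological argument'' is known to substitute for them. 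So as it stands the proposal establishes Claim~\ref{cl2} in no dimension; it only re-derives the reduction that the paper also carries out, and defers the part that distinguishes a proof from the flawed original one.

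Two further points need repair even granting the routing step. First, you allow yourself to place a \emph{rotated or reflected} copy of the witness configuration $\psi$: this is not legitimate, because an enhancement is only assumed translation-invariant, so $\cE_0$ applied to a rotated $\psi$ need not produce the rotated bridge. The paper avoids any such symmetry by keeping $\psi\cap B_{r+1}$ intact and rerouting the two exit points to $\pm e_m$ \emph{outside} the range of the enhancement (proof of Lemma~\ref{inside}); your gadget should be built the same way. Second, ``make everything in $B'$ closed except this copy and the two connectors'' is not by itself safe: other tentacles of the clusters of $0$ and of $S_L$ may hug the boundary of $B'$ from outside, and a connector one step inside the boundary can become adjacent to them, creating a path from $0$ to $S_L$ in the deactivated state. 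The paper handles this by first pruning $\omega$ inside $B_{r+2}(v)$ down to a shortest (hence induced) path and then checking, via the equivalence-class argument on $S_{r+1}(v)$ together with the minimality of the pruned configuration, that the outside configuration $\omega^*$ induces the same boundary connectivity data as the path alone; some such bookkeeping is needed in your write-up, as is an actual treatment (not just a mention) of the boundary cases $v$ near $0$ and $v$ near $S_L$, the latter of which requires constructing an explicit induced path to $S_L$.
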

Indeed, suppose we are given a state $(\omega,\alpha)$ with $v$ $p$-pivotal.
Then $\cE(\omega\cup\{v\},\alpha)$ contains a path from $0$ to $S_L$
and $\cE(\omega\setminus\{v\},\alpha)$ does not.
Consider deleting points $w\in \alpha\cap B_R(v)$ from $\alpha$ one-by-one,
which can only remove sites from the enhanced configuration.
If at some stage there is no longer a path from $0$ to $S_L$
in the current configuration $\cE(\omega\cup\{v\},\alpha')$,
then the last activation site $w$ deleted is $s$-pivotal in this state,
and we have found a configuration
of the type required by Claim~\ref{cl1}. Otherwise,
$v$ is $p$-pivotal in $(\omega,\alpha\setminus B_R(v))$ and we apply
Claim~\ref{cl2}.

The strategy of the proof of Claim~\ref{cl2} described in~\cite{AG} is very simple.
We state the first part as a lemma. Here, following~\cite{AG},
$e_m$ denotes the site $(m,0,\ldots,0)$; we write $-e_m$ for $(-m,0,\ldots,0)$.
There are two version of this result, one for the $\ell_1$ metric and one for $\ell_\infty$.
Either immediately implies the other.

\begin{lemma}\label{inside}
Let $\cE_0$ be an essential enhancement of site percolation on $\Z^d$, $d\ge 2$, with range $r$. Then there is an $m>r$
and a (finite) configuration $\omega$ with the following properties:

(i) $\omega\subset B_m$, and $\omega\cap S_m=\{e_m,-e_m\}$.

(ii) $\omega$ does not contain a path joining $-e_m$ to $e_m$.

(iii) $\omega\cup \cE_0(\omega)$ does contain a path joining $-e_m$ to $e_m$.
\end{lemma}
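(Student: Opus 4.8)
The plan is to deduce the required finite picture from the defining property of an essential enhancement, by ``closing up'' a witnessing doubly infinite path. Throughout I would work with the $\ell_1$ metric; as remarked above this also yields the $\ell_\infty$ statement (if $\none{v}\le m$ for every $v\in\omega$ and $\omega\cap S_m=\{e_m,-e_m\}$ in the $\ell_1$ sense, the same holds for $\ell_\infty$). By essentiality, fix a configuration $\omega_0$ that contains no doubly infinite path but for which $\omega_0\cup\cE_0(\omega_0)$ contains a doubly infinite simple path $P=(x_i)_{i\in\Z}$. Since $\cE_0(\omega_0)$ is finite and contained in $B_r$, only finitely many $x_i$ lie in $B_r$, and every $x_i\notin B_r$ lies in $\omega_0$ (the extra sites $\cE_0(\omega_0)\setminus\omega_0$ all lie in $B_r$); also $P$ meets $B_r$ (else $P\subseteq\omega_0$, impossible). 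Let $i_-$ and $i_+$ be the least and greatest index with $x_i\in B_r$; then $R_+=(x_i)_{i>i_+}$ and $R_-=(x_i)_{i<i_-}$ are disjoint infinite paths in $\omega_0\setminus B_r$. The first thing I would prove is that $R_+$ and $R_-$ lie in \emph{distinct} components of $\omega_0$: otherwise there is an $\omega_0$-path $\pi$ from some $x_a\in R_+$ to some $x_b\in R_-$, and taking $\pi$ minimal (so it meets $R_+\cup R_-$ only in $x_a$ and $x_b$), following $R_+$ in from $+\infty$ to $x_a$, then $\pi$ to $x_b$, then $R_-$ out to $-\infty$ is a doubly infinite simple path in $\omega_0$ (the two infinite pieces $(x_i)_{i\ge a}$ and $(x_i)_{i\le b}$ are disjoint as $a>b$, and $\pi$ is internally disjoint from both), contradicting the choice of $\omega_0$.

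Next I would truncate $P$. Put $\rho=1+\max\bigl(r,\max\{\none{x_i}:i_-\le i\le i_+\}\bigr)$, so $\rho>r$ and the central segment $(x_i)_{i_-\le i\le i_+}$ lies in $B_{\rho-1}$. Since in the $\ell_1$ metric the norm changes by exactly $1$ along each edge, let $b_+$ be the first vertex of $R_+$ of norm $\rho$, let $b_-$ be the first vertex of $R_-$ of norm $\rho$, and let $Q$ be the sub-path of $P$ from $b_-$ to $b_+$. Then $V(Q)\subseteq B_\rho$, $V(Q)\cap S_\rho=\{b_+,b_-\}$, and $V(Q)$ contains every vertex of $P$ lying in $B_r$; in particular $V(Q)\cap B_r=V(P)\cap B_r$ and $V(Q)\setminus B_r\subseteq\omega_0$. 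Now fix a large integer $m$ and join $b_+$ to $e_m$ by a path $A_+$ and $b_-$ to $-e_m$ by a path $A_-$, both running through the annulus $\{v:\rho<\none{v}<m\}$; for $d\ge2$ and $m$ large this annulus together with the four attached vertices $b_+,b_-,e_m,-e_m$ is $2$-connected, so by Menger's theorem disjoint such paths exist (relabelling endpoints if needed). With a little care I would also arrange that $A_+$ and $A_-$ are induced paths in $\Z^d$, lie at $\ell_1$-distance $\ge2$ from one another, meet $V(Q)$ only along the single edge of $Q$ incident with $b_+$ and with $b_-$ respectively, and meet $S_\rho$ and $S_m$ only at their endpoints. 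Finally set
\[
 \omega = (\omega_0\cap B_r)\cup(V(Q)\setminus B_r)\cup V(A_+)\cup V(A_-).
\]

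It then remains to verify (i)--(iii). Property (i) is immediate, and since $A_+$, $A_-$ and $V(Q)\setminus B_r$ all avoid $B_r$ we have $\omega\cap B_r=\omega_0\cap B_r$, hence $\cE_0(\omega)=\cE_0(\omega_0)$ and $\omega\cup\cE_0(\omega)=\omega\cup\cE_0(\omega_0)$. For (iii), concatenating $A_-$ (from $-e_m$ to $b_-$), $Q$ (from $b_-$ to $b_+$) and $A_+$ (from $b_+$ to $e_m$) yields a walk every vertex of which lies in $\omega\cup\cE_0(\omega_0)$ — a vertex of $Q$ inside $B_r$ is either in $\omega_0\cap B_r\subseteq\omega$ or an enhancement site in $\cE_0(\omega_0)$ — so $\omega\cup\cE_0(\omega)$ contains a path from $-e_m$ to $e_m$. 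For (ii), write $\omega^\circ=(\omega_0\cap B_r)\cup(V(Q)\setminus B_r)$, so $\omega^\circ\subseteq\omega_0$ and $\omega=\omega^\circ\cup V(A_+)\cup V(A_-)$, with $V(A_+)$ meeting the rest of $\omega$ only at $b_+$ and $V(A_-)$ only at $b_-$. Because the $A_\pm$ were chosen clean, the components of $\omega$ are the components of $\omega^\circ$ avoiding $b_+,b_-$, together with $K_+\cup V(A_+)$ and $K_-\cup V(A_-)$, where $K_\pm$ is the $\omega^\circ$-component of $b_\pm$. Since $b_+$ and $b_-$ lie in distinct components of $\omega_0$, hence of $\omega^\circ$, the endpoints $e_m$ and $-e_m$ lie in distinct components of $\omega$, so $\omega$ contains no path joining them.

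The step I expect to be the main obstacle is the routing of $A_+$ and $A_-$: making them disjoint, induced, mutually non-adjacent, and attached to $V(Q)$ and to the two spheres only where intended, so that the component analysis for (ii) is valid. This is a purely finite geometric problem, but a fiddly one — for instance $R_+$ and $R_-$ may run parallel at $\ell_1$-distance exactly $2$ (any closer would connect them in $\omega_0$), forcing $b_+$ and $b_-$ to be close together, so one must check that there is still room in the annulus to keep the two exit paths apart; this is where $d\ge2$ and the freedom to take $m$ large come in. By contrast, the conceptual core — that $R_+$ and $R_-$ lie in different components of $\omega_0$ — is short.
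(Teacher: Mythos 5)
Your overall architecture is sound and in outline parallels the paper's proof: extract from the essentiality witness a connection across $B_r$ that exists only thanks to the enhancement, keep the configuration inside the ball, discard everything else, and route two mutually non-adjacent paths outward to $\pm e_m$; then (iii) follows from locality (since $\omega\cap B_r=\omega_0\cap B_r$ gives $\cE_0(\omega)=\cE_0(\omega_0)$) and (ii) from a two-components argument. Your particular route --- proving that the tails $R_+$ and $R_-$ lie in distinct components of $\omega_0$ globally, and truncating the doubly infinite path at the $\ell_1$-sphere $S_\rho$ chosen beyond its excursions, so that $V(Q)\cap S_\rho=\{b_+,b_-\}$ and the unit-step property of the $\ell_1$ norm insulates the interior from the new paths --- is correct, and is if anything more explicit than the paper's one-sentence selection of two points $u,v\in\omega'\cap S_{r+1}$ that are connected in $(\omega'\cup X)\cap B_{r+1}$ but not in $\omega'\cap B_{r+1}$. (The ``induced'' requirement on $A_\pm$ is not needed; only disjointness, mutual non-adjacency, and the endpoint/sphere conditions enter your component analysis.)

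The genuine gap is the step you flag yourself: the existence of $A_+$ and $A_-$ that are not merely vertex-disjoint but at $\ell_1$-distance at least $2$ from each other and meet $S_\rho$ only at $b_\pm$. Menger/$2$-connectivity yields vertex-disjoint paths only; non-adjacency is exactly what (ii) requires (a single adjacency between $A_+$ and $A_-$ joins $e_m$ to $-e_m$ in $\omega$), and a routing-in-an-annulus claim of precisely this flavour is what the paper shows Aizenman and Grimmett got wrong, so it cannot be waved through here. Your setting is genuinely easier than Conjecture~\ref{cpath}, because there are no pre-existing vertices outside the sphere constraining the routing --- only the two start points $b_\pm$, which are non-adjacent since they lie in distinct components of $\omega_0$ --- so the statement you need is true; but it still needs an explicit construction with a small case analysis (e.g.\ when $b_+$ is a corner of $S_\rho$ and $\|b_+-b_-\|_1=2$, so $b_+$ has a unique outward neighbour). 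This is exactly what the paper's proof supplies where you defer: it pushes its two chosen boundary points radially outward two steps, notes that the pushed points remain pairwise non-adjacent and are the only occupied sites on the intermediate sphere, and then routes the two paths explicitly, coordinate by coordinate, to opposite faces and then to $\pm e_{m-1}$, splitting into cases according to whether both points lie on a common face; in the $\ell_1$ setting, Lemmas~\ref{3far} and~\ref{3step} show the shape such an argument takes. Writing out that routing (for all $d\ge2$, since the lemma is stated for all $d$) is the one missing ingredient in your proposal.
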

In other words, in the state $(\omega,\emptyset)$, the origin is $s$-pivotal for the event
that there is a path
joining $\pm e_m$. This configuration serves as a stand-alone unit that can be `plugged in'
to some more complicated configuration to create an $s$-pivotal site:
since $\cE_0(\omega)\subseteq B_{m-1}$, activating the enhancement at the origin has no effect
on or outside $S_m$, so from the outside the only change is that the points $\pm e_m$
are now connected inside $B_m$, which previously they were not.

Although Lemma~\ref{inside} is essentially immediate, we provide a complete proof
in Section~\ref{sec_inside}; Aizenman and Grimmett do not give a proof.

The second part of the strategy in~\cite{AG} is as follows. Suppose that
in the state $(\omega,\alpha)$ the site $v$ is
$p$-pivotal for the event $\cR_L$ that there is an open path from $0$ to $S_L$,
with no points of $\alpha$ near $v$.
Suppose for the moment that $v$ is far from both $0$ and $S_L$.
Then there is an open path $P=v_0v_1\cdots v_\ell$ joining $0$ to $S_L$ and passing through $v$.
The idea is to pick a suitable $r>m$ (they take $r=m+4$ where $m$ is as in Lemma~\ref{inside}),
and to modify $P$ within $B^\infty_r(v)$
so that inside $B^\infty_m(v)$ (or $B^1_m(v)$ -- here it makes little difference) the configuration
is a translate of that given by Lemma~\ref{inside}, and $v$ is $s$-pivotal.

Let $v_i$ and $v_j$ be the first and last points of $P$ in $B^\infty_r(v)$.
Aizenman and Grimmett~\cite[p.\ 829]{AG} state that it is possible to change the configuration
$\omega$ inside $B^\infty_r(v)$ so that there are open paths from $v_i$
to $v-e_m$ and from $v_j$ to $v+e_m$ contained `strictly within'
$B^\infty_r(v)\setminus B^\infty_m(v)$ (which we interpret to mean
within $B^\infty_{r-1}\setminus B^\infty_m(v)$)
except for their endvertices, such that
no vertex of the first is a neighbour of a vertex of the second. This statement
would, together with Lemma~\ref{inside}, imply Claim~\ref{cl2} and hence Conjecture~\ref{cmain}.
Rather than give a `turgid formal proof' of the existence of these paths,
they refer the reader to a figure similar to \Fg{1}.
Unfortunately, such paths do not in general exist.

\begin{figure}
\[\unit=12pt
 \thnline
 \dl{-8}{-8}{8}{-8}\dl{8}{-8}{8}{8}\dl{8}{8}{-8}{8}\dl{-8}{8}{-8}{-8}
 \dl{-2}{-2}{2}{-2}\dl{2}{-2}{2}{2}\dl{2}{2}{-2}{2}\dl{-2}{2}{-2}{-2}
 \dotline
 \dl{-5}{-5}{5}{-5}\dl{5}{-5}{5}{5}\dl{5}{5}{-5}{5}\dl{-5}{5}{-5}{-5}
 \medline
 \dl{-9}{7}{-6}{7}\dl{-6}{7}{-6}{0}\dl{-6}{0}{-3}{0}\dl{-3}{0}{-3}{1}\dl{-3}{1}{-2}{1}
 \dl{-9}{6}{-7}{6}\dl{-7}{6}{-7}{-7}\dl{-7}{-7}{7}{-7}\dl{7}{-7}{7}{0}
 \dl{7}{0}{3}{0}\dl{3}{0}{3}{3}\dl{3}{3}{1}{3}\dl{1}{3}{1}{2}
 \pt00\ptlr{0}{0}{v}\ptlr{-2}{-4}{B_m}\ptll{8}{7}{B_r}
\]
\caption{Rewiring paths in an annulus. The configuration inside $B_m$ is given
by Lemma~\ref{inside}.}\label{f:1}
\end{figure}

For example, suppose $P$
first enters the cube $\Binf_r(v)$ at a corner point. There is no
way that this path can be continued in the {\em interior} of $\Binf_r(v)$.
Allowing ourselves to continue in the boundary $\Sinf_r(v)$ of $\Binf_r(v)$ does not help.
Indeed, thinking of the vertices $v_0,\ldots,v_i$  as red and $v_j,\ldots,v_\ell$
as green, since we aim to make the site $v$ $s$-pivotal, we must avoid
creating a red-green connection that does not go through $\Binf_m(v)$.
Continuing the red path along the boundary of $\Binf_r(v)$ a few steps before entering
the interior may thus be ruled out by the presence of certain green vertices
in $\Sinf_{r+1}(v)$; see \Fg{2}. A somewhat tedious
case-by-case analysis is possible to resurrect the proof in two dimensions,
by redirecting the green path to enter $\Binf_r(v)$ at an earlier point, but
in three or more dimensions the situation becomes much worse. Indeed,
there are problems with paths meeting $\Binf_r(v)$ not just at the
corners, but at points along edges as well.

\begin{figure}
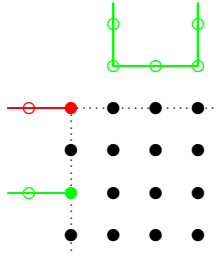

\[\unit=16pt
 \dotline\dl0{-.5}03\dl03{3.5}3\thnline
 \rd\pt03\pe{-1}{3}\dl{-1.5}{3}{0}{3}
 \bk\pt13\pt23\pt33
 \pt02\pt12\pt22\pt32
 \pt11\pt21\pt31
 \pt00\pt10\pt20\pt30
 \g\pt01\pe{-1}{1}\pe14\pe24\pe34\pe15\pe35
 \dl{-1.5}{1}{0}{1}\dl{1}{5.5}{1}{4}\dl{1}{4}{3}{4}\dl{3}{4}{3}{5.5}
 \hskip3\unit
\]
\caption{A problem at a corner in $\Z^3$. The red path enters the cube at the corner, but cannot
 enter the interior without becoming adjacent to the green path. The red path can
 however leave the corner via the 3rd dimension. The green path approaches the top of
 the cube before leaving and returning to the cube on the left.}\label{f:2}
\end{figure}

\section{Closing the gap}\label{sec_closing}

Since the particular combinatorial statement from which Aizenman and Grimmett~\cite{AG}
deduce Claim~\ref{cl2} is false, we seek a replacement.
We state this as a conjecture in general, since we can only prove certain cases.
In this conjecture, we may use either the $\ell_1$ or the $\ell_\infty$ distance; either
form of the conjecture immediately implies the other.

\begin{conjecture}\label{cpath}
Let $d\ge 2$ and $m\ge 1$. Then there is an $r=r(d,m)>0$ with the following property.
Let $P_R$ and $P_G$
be induced paths in $\Z^d$, one red and one green, each starting
outside $B_{r+2}$ and ending at a neighbour of $0$, with no red
vertex adjacent to any green vertex. Then we may modify the paths inside $B_r$
so that one ends at $-e_m$, the other ends at $e_m$, neither contains any other vertices
of $B_m$, and still no red vertex is adjacent to any green vertex.
\end{conjecture}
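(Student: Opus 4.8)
The plan is to reduce Conjecture~\ref{cpath} to a routing problem in a wide lattice annulus and then to confront the only genuine difficulty, which lies near the outer sphere $S_r$. Take $r=r(d,m)$ large and work with the $\ell_\infty$ metric. Since the modified red path must agree with $P_R$ on $\Z^d\setminus B_r$, it is forced to re-enter $B_r$ at the first vertex $v_R$ of $P_R$ lying in $B_r$, and likewise $v_G$ for green; note $v_R,v_G\in S_r$ and that $v_R$ is not adjacent to any green vertex. A straightforward argument reduces us to the case where each of $P_R,P_G$ meets $B_r$ in a single arc, so that this re-entry point is genuinely forced. Next decree, once and for all, that inside $B_{2m}$ the new red path is the straight segment $-e_{2m},-e_{2m-1},\dots,-e_m$ and the new green path is $e_{2m},\dots,e_m$; these are non-adjacent (opposite-sign first coordinates of absolute value $\ge m$), meet $B_m$ only at their endpoints $\mp e_m\in S_m$, and approach $S_{2m}$ radially. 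It then suffices to join $v_R$ to $-e_{2m}$ and $v_G$ to $e_{2m}$ by two paths lying in the annulus between $S_{2m}$ and $S_r$ that are non-adjacent to one another and to the fixed parts of $P_R,P_G$. In the bulk of this annulus the task is routine: it is wide, no fixed vertices intrude, and two arcs in an annulus between prescribed boundary points can always be routed disjointly with a buffer in between (there is no Jordan-type obstruction in an annulus, unlike in a disk). Hence the whole difficulty is concentrated in a bounded neighbourhood of $S_r$, around $v_R$ and $v_G$.

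In the planar case $S_r$ is combinatorially a single cycle (the boundary of a square). The only obstruction is that $v_R$ may be a corner, from which one cannot step into $B_{r-1}$ at all, so the red path must first travel along $S_r$ to a face-interior vertex before diving inward, and green vertices lying just outside $S_r$ may block stretches of this route. The point is that green cannot block all the way around: $P_G$ itself has to cross into $B_r$ and $P_G$ is non-adjacent to $P_R$, so there is always a window, and at least one of the two directions around the cycle lets the red path escape inward; choose it, do the same for green (the two escapes only have to avoid each other, which they can since $v_R$ and $v_G$ are themselves non-adjacent), and then finish in the bulk as above. What remains is a finite local analysis near corners and near any diagonal (corner) contact between red and green vertices, checked by hand.

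In $\Z^3$ the sphere $S_r$ is combinatorially a $2$-sphere (the surface of a cube), with corners, edges and faces, and now $v_R$ may lie on an edge or at a corner, from which — exactly as in \Fg{2} — one is stuck on a two-dimensional surface and must navigate around possible green obstructions before reaching a face-interior vertex; the third dimension is what makes this possible, but it must be exploited with care. I would carry this out by a case analysis over the bounded list of local types of $S_r$ near the entry vertex (face, edge, corner), together with the finitely many relevant local green configurations, producing in each case an explicit escape corridor for each colour, and then joining up in the bulk. I expect this case analysis to be the main obstacle. It is already substantial for $d=3$; for $d\ge4$ the local types of $S_r$ and the ways two escape corridors can be forced to interlock near a low-dimensional face of $S_r$ proliferate beyond what I can control, so I would not expect this approach to settle $d\ge4$ — consistent with the conjecture being left open there, and with a pathological counterexample not being inconceivable.
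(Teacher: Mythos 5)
There is a genuine gap: for $d=3$ (the substantive case of Theorem~\ref{thmain}) you do not give an argument at all, but only a plan --- ``a case analysis over the bounded list of local types of $S_r$ near the entry vertex \dots together with the finitely many relevant local green configurations'' --- which you yourself flag as the main obstacle. Moreover, the plan as stated would not work, because the obstruction is not local to a bounded neighbourhood of the two first-entry vertices. Since you may only modify inside $B_r$, every vertex of the original paths on or outside $S_r$ that precedes your chosen entry point stays coloured, and a path may touch $S_r$ many times before it ever enters $B_{r-1}$; each earlier red touch constrains every later green entry attempt and vice versa. The genuinely hard configuration is not a finite local pattern at a corner: it is an interleaved sequence of red and green touches $R_1,G_1,R_2,G_2,\dots$ running along an entire edge of the sphere, of length of order $r$. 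In the paper this has to be identified by an induction (Claim~\ref{cldiag}), after setting up the ``good quadruple'' bookkeeping that records adjacency to \emph{all} earlier touches of the other colour, and the final escape uses where the original paths are forced to have come from and gone to just outside $S_r$ --- information your ``escape corridor'' scheme never tracks. Your two-dimensional sketch is fine in spirit (it is close to the paper's octagon argument), and the bulk routing in a wide annulus is indeed routine, but the whole content of the three-dimensional case is exactly the part you leave undone.

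A secondary point: you work with the $\ell_\infty$ ball, which makes the boundary analysis strictly worse --- on the cube one gets stuck not only at corners but also at points along edges (this is precisely the failure mode of the Aizenman--Grimmett figure, cf.\ \Fg{2}). The paper deliberately switches to the $\ell_1$ ball (the two forms of the statement are equivalent), so that the only problematic entry points are the six corners of the octahedron, and even then the two preparatory reductions (Lemmas~\ref{3far} and~\ref{3step}) are needed before the corner/interleaving case can be isolated. Choosing $\ell_\infty$ is not an error, but it multiplies the case analysis you have not carried out.
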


Less colourfully, the conjecture is equivalent to the assertion that if we have a single
induced path $P$ in $\Z^d$ through the origin,
joining two vertices outside $B_{r+2}$,
then we may modify $P$ inside $B_r$ to obtain an induced path $P'$
with the same endpoints so that $P'\cap B_m$ consists of
the line-segment joining $-e_m$ to $e_m$.

Note that the enhancement $\cE_0$ does not appear in Conjecture~\ref{cpath}, which is purely
a statement about paths in $\Z^d$; as in the original argument of Aizenman and Grimmett~\cite{AG},
the enhancement is only relevant in the proof of Lemma~\ref{inside}. The situation is similar
for site or bond percolation on other lattices.

Before turning to the proof of special cases of Conjecture~\ref{cpath}, let us show
that it implies Claim~\ref{cl2} and hence Conjecture~\ref{cmain} for site
percolation on $\Z^d$. (We discuss other percolation models briefly later.)

\begin{proof}[Proof that Conjecture~\ref{cpath} implies Conjecture~\ref{cmain}]
Fix $d\ge 2$, and assume that Conjecture~\ref{cpath} holds for this $d$.
In the light of the probabilistic argument of Aizenman and Grimmett~\cite{AG} and
the discussion in the previous section, it suffices to prove Claim~\ref{cl2}.

In the bulk of the argument we are about to give, it makes no difference whether
we work with the $\ell_1$ or $\ell_\infty$ metric; the exception is near the end
of the proof, when we consider only the $\ell_1$ metric. Formally, we consider
the $\ell_1$ metric throughout (in particular in defining $\cR_L$), but we only
indicate this choice once it becomes relevant.

Let $m$ be as given by Lemma~\ref{inside}, and let $r=r(m,d)$ be as in Conjecture~\ref{cpath}.
Set $R=3r+100$ and $L_0=100R$, say. Recall that $\cR_L$ is the event that the enhanced configuration
contains a path from $0$ to $S_L$.

Suppose that $L\ge L_0$, that $v$ is $p$-pivotal for the event $\cR_L$ in the state $(\omega,\alpha)$,
and that $\alpha\cap B_R(v)=0$. For the moment, suppose also that
\begin{equation}\label{notnear}
 r+10 < \|v\| < L -(r+10).
\end{equation}
Replacing $\omega$ by $\omega\cup\{v\}$ if necessary, we may assume that
$\cE(\omega,\alpha)$
contains a path from $0$ to $S_L$ but $\cE(\omega\setminus\{v\},\alpha)$ does not.
Delete, one-by-one, any sites $w\in \omega\cap B_{r+2}(v)$ not required for the existence
of a path from $0$ to $S_L$. Since $\alpha\cap B_{2r+2}(v)=\emptyset$,
and the range of the enhancement is at most $m-1<r$,
these sites play no role in any active enhancement, so we obtain a configuration $\omega'$
with the following properties:

(i) $\cE(\omega',\alpha)$ contains a path from $0$ to $S_L$

(ii) for any $w\in \omega'\cap B_{r+2}(v)$, the configuration
$\cE(\omega',\alpha)\setminus\{w\}$ contains no such path.

Since $v$ was $p$-pivotal we cannot delete it, so $v\in\omega'$.

Let $P=v_0v_1\cdots v_n$ be a shortest path from $0$ to $S_L$ in $\cE(\omega',\alpha)$. Then
$P$ is an \emph{induced} path (since any shortest path is). Moreover, (ii)
implies that $\omega'\cap B_{r+2}(v)=\cE(\omega',\alpha)\cap B_{r+2}(v)$
contains no site not on $P$, so (since $P\subseteq\cE(\omega',\alpha)$ and no
active enhancement affects $B_{r+2}(v)$)
we have
\begin{equation}\label{odr2}
 \omega'\cap B_{r+2}(v) = P\cap B_{r+2}(v).
\end{equation}

At this point the idea is to make the change inside $B_r(v)$ suggested by Conjecture~\ref{cpath},
and drop in the finite configuration given by Lemma~\ref{inside}, to obtain
a state with $v$ $s$-pivotal for the event $\cR_L$. We must be slightly
careful, because the configuration outside $B_{r+2}(v)$ need not consist only of $P$.
However, there is no real problem.

To spell things out, let $0$ and $z\in S_L$ be the ends of $P$.
Noting that $v$ is a vertex of the induced path $P$,
take as red and green paths the parts of $P$ obtained by deleting $v$.
Since $P$ is a shortest path the only point of $P$ on $S_L$ is $z$.
Hence, writing $\omega^P=P\setminus B_r(v)$ for the part of $P$ outside
$B_r(v)$,
Conjecture~\ref{cpath}
and Lemma~\ref{inside} together guarantee the existence of a
configuration $\omega_1\subset B_r(v)$ such that

(a) $\omega^P\cup \omega_1$ does not contain a path from $0$ to $S_L$ but

(b) $\omega^P\cup \omega_1 \cup X$ does,

\noindent
where $X=\cE_0(\omega_1-v)+v$. Let
\[
 \omega''= (\omega'\setminus B_r(v))\cup \omega_1.
\]
We claim that in the state $(\omega'',\alpha)$,
the site $v$ is $s$-pivotal. This will establish the required conclusion of Claim~\ref{cl2},
under the additional assumption \eqref{notnear}.

Since no enhancement within distance $2r$ of $v$ is active, inside $B_r(v)$
the enhanced configurations $\cE(\omega'',\alpha)$ and $\cE(\omega'',\alpha\cup\{v\})$
agree with $\omega_1$ and $\omega_1\cup X$, respectively.
Let
\begin{equation}\label{ostar}
 \omega^*=\cE(\omega'',\alpha)\setminus B_r(v) =\cE(\omega'',\alpha\cup\{v\})\setminus B_r(v)
 = \cE(\omega',\alpha)\setminus B_r(v).
\end{equation}
Then we must show exactly that

(a') $\omega^*\cup \omega_1$ does not contain a path from $0$ to $S_L$ but

(b') $\omega^*\cup\omega_1\cup X$ does.

\noindent
Note that (a') and (b') differ from (a) and (b)
in that $\omega^P$ is replaced by $\omega^*$.

A configuration $\omega^0$ outside $B_r(v)$ induces an equivalence relation $\sim$ on
$\omega^0\cap S_{r+1}(v)$, with two sites related if and only if $\omega^0$ contains
a path joining them. For each class, we note whether or not the sites in that
class are connected (in $\omega^0$) firstly to $0$, and secondly to $S_L$. For $\omega^P$,
it is easy to describe this relation: as $P$ is an induced path, the only connections
possible in $\omega^P$ are along sections of $P$.
Write $P=O_0I_1O_1\cdots I_kO_k$ where $k\ge 1$, each $O_i$ is a sequence of one or
more vertices outside $B_r(v)$, and each $I_i$ is a sequence of one or more vertices
inside $B_r(v)$. Then $\omega^P$ consists of the union of the paths $O_i$,
so there is one equivalence class $C_i$ for each set $O_i\cap S_{r+1}(v)$, $0\le i\le k$,
with only $C_0$ joined to $0$ and only $C_k$ joined to $S_L$.

We claim that $\omega^*$ induces the same equivalence relation and additional data.
First, recalling \eqref{odr2}, \eqref{ostar}, and that $\alpha$ contains
no active enhancement affecting sites in $B_{r+2}(v)$,
\[
 \omega^*\cap B_{r+2}(v)=(\omega'\setminus B_r(v))\cap B_{r+2}(v) = \omega^P\cap B_{r+2}(v).
\]
Thus $\omega^*$ and $\omega^P$ coincide in $S_{r+1}(v)$ and the two equivalence relations
have the same underlying set. Next, since $P$ is a path in $\cE(\omega',\alpha)$,
we have $\omega^P\subseteq \omega^*$, so two vertices of $S_{r+1}(v)$ connected
in $\omega^P$ are connected in $\omega^*$. Suppose for a contradiction that for some $i<j$
there is a path in $\omega^*$ joining the classes $C_i$ and $C_j$ defined above.
Then $I_j$ becomes redundant: in
$\omega^*\cup ((P\setminus I_j)\cap B_r) = \cE(\omega',\alpha)\setminus I_j$
there is a path from $0$ to $S_L$. But this contradicts the minimality condition (ii).
We obtain a similar contradiction if $\omega^*$ contains a path from $0$ to $C_i$, $i\ne 0$,
or from $S_L$ to $C_i$, $i\ne k$. Hence, in terms of connecting points of $S_{r+1}(v)$
to each other and/or to $0$ or to $S_L$, the configurations $\omega^*$ and $\omega^P$
are equivalent. Thus (a) and (b) imply (a') and (b').

\medskip

Recalling \eqref{notnear}, it remains only to consider the cases $v\in B_{r+10}(0)$
and $v\in B_L(0)\setminus B_{L-r-10}(0)$. We remind the reader that $(\omega,\alpha)$
is a state in which $v$ is $p$-pivotal for $\cR_L$, with $\alpha\cap B_R(v)=0$.

The case $v\in B_{r+10}(0)$ is easily handled: in this case in the state obtained
by adding all points of $B_{r+20}(0)$ to $\omega$ there is a path from $0$ to $S_L$.
Now delete all points of $S_{r+19}(0)$ except for $w=e_{r+19}$, say.
The point $w$ is now
$p$-pivotal for the existence of a path from $0$ to $S_L$, so we may apply the argument
above with $w$ in place of $v$.

If $v$ is close to $S_L=\Sone_L(0)$ we need to work a little harder, but the situation is not too bad
since we need only consider one path.
Here we shall consider only the $\ell_1$ metric, so $L-(r+10)\le \|v\|_1\le L$.
(Clearly no point $v$ with $\|v\|_1>L$ can be $p$-pivotal when $\alpha\cap \Bone_r(v)=\emptyset$).
Choose a point $w$ so that $\|w\|_1=L-(r+20)$ and $\|v-w\|\le r+20$.
One by one, delete points in $\Bone_{r+30}(w)$ from $\omega$ that are not required to join
$0$ and $\Sone_L$. As above, eventually we are left with a set $\omega'$ such that 
$\omega'\cap\Bone_{r+30}(w)$ consists of the intersection of $\Bone_{r+30}(w)$
with an induced path $P$ joining $0$ to $\Sone_L$ via~$v$. Let $u$ be the first point
of $P$ that lies in $\Bone_{r+30}(w)$. As $u$ occurs before $v$ on $P$,
we have $\|u\|_1<L$. Now remove from $\omega$ all
points in $\Bone_{r+30}(w)$ except for~$u$. Suppose we can construct an induced path
$P'$ from $u$ to $\Sone_L$ passing through $w$ such that $P'$ is contained in $\Bone_{r+29}(w)$
except for the single vertex $u\in \Sone_{r+30}(w)$.
Then, noting that $\alpha\cap\Bone_{2r+30}(w)\subset \alpha\cap\Bone_{3r+50}(v)=\emptyset$,
the site $w$ is $p$-pivotal for the event $\cR_L$
in the state $((\omega\setminus\Bone_{r+30}(w))\cup P',\alpha)$.
Since $\|w\|_1<L-(r+10)$,
we are then done by the above argument. It thus remains only to construct~$P'$.

Let $u=(u_1,\dots,u_d)$ and $w=(w_1,\dots,w_d)$. We construct the
segment of the path $P'$ from $u$ to $w$ by changing each coordinate monotonically,
so that the distance to $w$ reduces by 1 at each step. To ensure that the
path stays inside $\Bone_{L-1}$, we take all the steps which reduce the
absolute value of some coordinate first, then take all the remaining
steps that increase the absolute value of some coordinate. Suppose
the last step on this path is from $w-e$ to $w$, where
$e=(0,\dots,\pm1,\dots,0)$ and the $\pm1$ lies in the $i$th coordinate, say.
We then take one further step to $w+e$, and then all subsequent steps
increase the absolute value of some coordinate other than the~$i$th.
After at most $r+21$ steps we hit $\Sone_L$ at some point in the
interior of $\Bone_{r+30}(w)$. Moreover the resulting path
$P'$ is induced. Indeed, all points of $P'$ after $w$ differ from all
points of $P'$ before $w$ by at least 2 in coordinate~$i$, while
the segments to and from $w$ change the distance to $w$ monotonically.
\end{proof}

\section{The proof of Lemma~\ref{inside}}\label{sec_inside}

For completeness we give a proof of Lemma~\ref{inside}, even
though it is essentially trivial. Here we consider the $\ell_\infty$ ball.

\begin{proof}[Proof of Lemma~\ref{inside}]
Suppose that the essential enhancement $\cE_0$ has range $r$, with respect to the $\ell_\infty$ metric.
By the definition of essential enhancement, there is a configuration
$\omega'$ such that $\omega'$ contains no two-way infinite path,
but $\cE(\omega',\{0\})=\omega'\cup\cE_0(\omega')$ does.
Let $X=\cE_0(\omega')$ so, since the enhancement has range $r$,
$X\subseteq \Binf_r=\Binf_r(0)$.
Let $u$ and $v$ be two points (i.e., vertices/sites) in $\omega'\cap\Sinf_{r+1}$
in the same component of $(\omega'\cup X)\cap\Binf_{r+1}$, but
in different components of $\omega'\cap\Binf_{r+1}$. Such points exist
since $\omega'$ and $\omega'\cup X$ differ only inside $\Binf_r$ but induce different connectivity
relations on the points outside $\Binf_r$.

As $u\in\Sinf_{r+1}$ there must be a coordinate of $u$ that
is $\pm(r+1)$. Let $u_1,u_2$ be the points obtained by increasing
the magnitude of one such coordinate by one and two steps respectively, so
that $uu_1u_2$ forms a straight path with $u_1\in \Sinf_{r+2}$, $u_2\in\Sinf_{r+3}$.
Define $v_1,v_2$ similarly. Note that none of $v,v_1,v_2$ is adjacent to any of
$u,u_1,u_2$. Writing $e_k$ for $(k,0,\dots)$, our aim now is to construct a path
$P_u$ from $u_2$ to one of $\pm e_{m-1}$, for some $m\ge r+4$, and a path $P_v$ from $v_2$ to the other
of $\pm e_{m-1}$, so that $P_u\cup P_v\subseteq \Binf_{m-1}\setminus\Binf_{r+2}$ and no vertex of $P_u$
is adjacent to any vertex of $P_v$. Since the only vertices (sites) present in $\Sinf_{r+2}$
are $u_1$ and $v_1$, adding the vertices $\pm e_m$ will then give the required configuration
$\omega$.

Write $u_2=(a_1,a_2,\dots,a_d)$ and $v_2=(b_1,b_2,\dots,b_d)$.
By interchanging $u$ and $v$ and/or reflecting in the hyperplane $x_1=0$,
we may assume without loss of generality that $a_1\le b_1$, and that if $a_1=b_1=\pm (r+3)$
then $a_1=b_1=r+3$.

Consider first the case $a_1<r+3$. Starting from $u_2$, construct $P_u$
by first taking successive steps reducing $a_1$ to $-(r+3)$,
reaching the `left-hand' face of the cube $\Sinf_{r+3}$. Then continue to $-e_{r+3}$ within this face,
successively reducing the magnitude of each of the other
coordinates in turn to 0. Similarly, starting from $v_2$ construct $P_v$ by
first increasing $b_1$ (which is not equal to $-(r+3)$) to $r+3$,
i.e., moving to the right-hand face if not already in it.
Then successively reduce the magnitude
of each of the other coordinates in turn to~0. These paths have the required property
with $m=r+4$.

Finally, consider the case $a_1=b_1=r+3$, when both $u_2$ and $v_2$ are in the
right-hand face. Since $u_2$ and $v_2$ differ in some coordinate,
we may assume without loss of generality that $a_2<b_2$. Starting from $u_2$, construct
what will be the first part of $P_u$ by decreasing
$a_2$ until it reaches $-(r+5)$. Similarly, from $v_2$ increase
$b_2$ until it reaches $r+5$. Let $u'$ and $v'$ be the end vertices of the paths
constructed so far. Then $u'$ and $v'$ are in the `top' and `bottom' faces of $\Sinf_{r+5}$.
Since neither is in the right-hand face of $\Sinf_{r+5}$ we
may continue as in the previous case with $u'$, $v'$ in place of $u_2$, $v_2$ and $r+3$ in place of~$r$.
\end{proof}

\section{The proof for $\Z^2$ and $\Z^3$}\label{sec_Z2Z3}

In this section we prove Conjecture~\ref{cpath} for $d=2$ and for $d=3$,
thus proving Theorem~\ref{thmain}. To avoid the problems described
at the end of Section~\ref{sec_AG}, we work with the $\ell_1$-ball.
Then it can be seen that the only difficult cases occur when one
of the given coloured paths first meets $\Bone_r$ at a corner.

\subsection{The square lattice}

\begin{proof}[Proof of Conjecture~\ref{cpath} for $d=2$]
We work throughout with the $\ell_1$ metric. We shall prove
the result with $r=r(m,2)=m+c$ for some constant $c$. We assume
without loss of generality that $m\ge 50$, say. It will be convenient
to consider the octagon $O_r$ formed by removing the corner points
$(\pm r,0)$, $(0,\pm r)$ from $\Bone_r=\Bone_r(0)$.

\begin{figure}
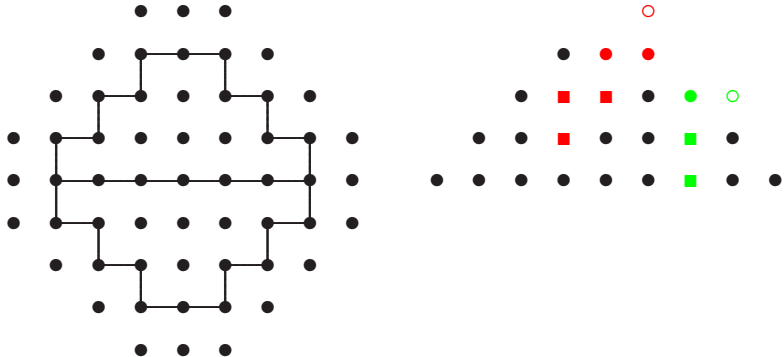

\[\unit=16pt
 \pt{-1}{4}\pt{0}{4}\pt{1}{4}
 \pt{-2}{3}\pt{-1}{3}\pt{0}{3}\pt{1}{3}\pt{2}{3}
 \pt{-3}{2}\pt{-2}{2}\pt{-1}{2}\pt{0}{2}\pt{1}{2}\pt{2}{2}\pt{3}{2}
 \pt{-4}{1}\pt{-3}{1}\pt{-2}{1}\pt{-1}{1}\pt{0}{1}\pt{1}{1}\pt{2}{1}\pt{3}{1}\pt{4}{1}
 \pt{-4}{0}\pt{-3}{0}\pt{-2}{0}\pt{-1}{0}\pt{0}{0}\pt{1}{0}\pt{2}{0}\pt{3}{0}\pt{4}{0}
 \pt{-4}{-1}\pt{-3}{-1}\pt{-2}{-1}\pt{-1}{-1}\pt{0}{-1}\pt{1}{-1}\pt{2}{-1}\pt{3}{-1}\pt{4}{-1}
 \pt{-3}{-2}\pt{-2}{-2}\pt{-1}{-2}\pt{0}{-2}\pt{1}{-2}\pt{2}{-2}\pt{3}{-2}
 \pt{-2}{-3}\pt{-1}{-3}\pt{0}{-3}\pt{1}{-3}\pt{2}{-3}
 \pt{-1}{-4}\pt{0}{-4}\pt{1}{-4}
 \thnline\dl{-1}{3}{1}{3}\dl{1}{3}{1}{2}\dl{1}{2}{2}{2}\dl{2}{2}{2}{1}\dl{2}{1}{3}{1}
 \dl{3}{1}{3}{-1}\dl{3}{-1}{2}{-1}\dl{2}{-1}{2}{-2}\dl{2}{-2}{1}{-2}\dl{1}{-2}{1}{-3}
 \dl{1}{-3}{-1}{-3}\dl{-1}{-3}{-1}{-2}\dl{-1}{-2}{-2}{-2}\dl{-2}{-2}{-2}{-1}
 \dl{-2}{-1}{-3}{-1}\dl{-3}{-1}{-3}{1}\dl{-3}{1}{-2}{1}\dl{-2}{1}{-2}{2}
 \dl{-2}{2}{-1}{2}\dl{-1}{2}{-1}{3}
 \dl{-3}{0}{3}{0}
 \hskip10\unit
 \pt{-1}{3}
 \pt{-2}{2}\pt{1}{2}
 \pt{-3}{1}\pt{0}{1}\pt{1}{1}\pt{3}{1}
 \pt{-4}{0}\pt{-3}{0}\pt{-1}{0}\pt{0}{0}\pt{1}{0}\pt{3}{0}\pt{4}{0}
 \pt{-2}{1}\pt{-2}{0}
 \g\pe{3}{2}\pt{2}{2}\px{2}{1}\px{2}{0}
 \rd\pe{1}{4}\pt{1}{3}\pt{0}{3}\px{0}{2}\px{-1}{2}\px{-1}{1}
\]
\caption{(a) The octagon $O_r$ (dots), and cycle $C$ in $O_{r-1}\setminus O_{r-3}$ along
with the line joining $\pm(r-2,0)$. (b) Red and green paths first entering $O_r$
at $(1,r-1)$ and $(2,r-2)$. The circles show vertices on the original paths (hollow ones
outside $O_r$); the squares show
a way of continuing inside $O_{r-1}$ to first reach $O_{r-2}$ at points (the final coloured
squares) that are not diagonally adjacent.}\label{f:3}
\end{figure}

We are given red and green paths $P_R$ and $P_G$
starting outside $\Bone_{r+2}$ and ending near the origin.
Truncate each path at its first vertex in $O_r$,
obtaining shortened paths $P_R'=R_1\cdots R_a$ and $P_G'=G_1\cdots G_b$
with $R_a$ and $G_b$ in $O_r$ and all other vertices of $P_R'$ and $P_G'$
outside $O_r$. Recall that no red vertex
is adjacent to any green vertex.
We aim to add points in the interior $O_{r-1}$ of $O_r$ so as to join
$R_a$ and $G_b$ by an induced path whose intersection
with $\Bone_m$ is $\{(i,0):i=-m,\dots,m\}$. Equivalently, we aim to add red and green points
in $(O_{r-1}\setminus\Bone_m)\cup \{\pm e_m\}$
so that $R_a$ is connected to one of $\pm e_m$ by a red path,
$G_b$ is connected to the other of $\pm e_m$ by a green path, and no red point is adjacent to
any green point.

Note that the `annulus' $O_{r-1}\setminus O_{r-3}$ contains a cycle $C$ that covers
all vertices of $O_{r-1}\setminus O_{r-2}$; see \Fg{3}(a).

We proceed in three steps. First suppose that $R_a$ and $G_b$
are far apart, say at distance at least $10$, although some much
smaller constant suffices.
Then we may join $R_a$ to a neighbour $u$ on $C$ and $G_b$ to a neighbour
$v$ on $C$, and continue around $C$ to the points $\pm e_{r-2}=(\pm(r-2),0)$. Since
$u$ and $v$ are far apart it is possible to do this without the paths
within $C$ getting close to each other. For example, if the $x$-coordinates of $u$ and $v$ differ
significantly, proceed to the left to $-e_{r-2}$ from the point with smaller $x$-coordinate,
and to the right to $e_{r-2}$ from the point with larger $x$-coordinate. If the $x$-coordinates
are close, then the $y$-coordinates differ significantly, have opposite signs, and neither is close to $0$,
so we may proceed clockwise from each of $u$ and $v$, joining one to $e_{r-2}$ and the other
to $-e_{r-2}$. Finally, connect $e_{r-2}$ and $-e_{r-2}$ with a straight line.

Next suppose that $R_a$ and $G_b$ are within distance $9$, but are not diagonally adjacent
points with one being a corner of $O_{r-1}$. Then we can find neighbours $u$ and $v$
of $R_a$ and $G_b$ on $C$ so that neither of $u,R_a$ is adjacent to either of $v,G_b$.
Then we can move
around $C$ from $u$ and $v$ in opposite directions, to reach points $u'$ and $v'$
that are far from each other. Now
we take two steps from each of these points into $O_{r-3}$ and apply the first case above
with $r$ replaced by $r-3$.

Finally, suppose that one of $R_a$, $G_b$ is a corner of $O_r$
and the other is diagonally adjacent to it, for example
$R_a=(1,r-1)$ and $G_b=(2,r-2)$.
The original red path must have come from somewhere, and in particular
from a site $R_{a-1}$ outside $O_r$ and not adjacent to any green vertex;
the only possibility is $R_{a-1}=(1,r)$. The red path must also have continued somewhere,
and the only possibility is $R_{a+1}=(0,r-1)$. But then we can extend the red
and green paths into the interior of $O_r$ as shown in \Fg{3}(b).
These paths enter $O_{r-2}$ at points that are not diagonally adjacent, so we may apply
the previous case.
\end{proof}

\subsection{The cubic lattice}

We now show that we can continue non-adjacent coloured paths
as in Conjecture~\ref{cpath} into an $\ell_1$-ball
in $\Z^3$. The proof is significantly more complex than for the 2-dimensional
case, so we start with a couple of preparatory lemmas. These lemmas spell out things
that are `obvious', but in context we feel it is appropriate to give more detail
than usual. Throughout we work with the $\ell_1$ metric,
so $B_r=\Bone_r=\Bone_r(0)$
and $S_r=\Sone_r=\Sone_r(0)$.

\begin{lemma}\label{3far}
Let $d\ge 2$ and $m\ge 1$. Suppose that $r\ge m+10$.
Let $u=(u_1,\ldots,u_d)$ and $v=(v_1,\ldots,v_d)$ be points in $S_r$
such that, for some coordinate $i$, there is an integer $s$ with $|s|\le r-5$
and $u_i>s>v_i$ or $v_i>s>u_i$.
Then we may construct a red path starting at $u$ and a green path starting at $v$
with the following properties: one path ends at $-e_m$ and the other ends at $e_m$,
apart from their endvertices the paths lie entirely in $B_{r-1}\setminus B_m$,
and no red vertex is adjacent to any green vertex.
\end{lemma}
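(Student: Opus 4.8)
\textbf{Proof proposal for Lemma~\ref{3far}.}

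The plan is to route the red path from $u$ and the green path from $v$ to the two poles $\pm e_m$ along ``coordinate hyperplanes'' of the $\ell_1$-ball, using the separating coordinate $i$ and value $s$ to keep the two paths far apart throughout. First I would fix notation: by relabelling we may assume $u_i>s\ge v_i$ (after possibly swapping the colours), and by symmetry we may take $s\ge 0$, so that $v_i\le s\le r-5$. The idea is to first push $u$ to a point $u'$ with $u'_i=s+1$ and $v$ to a point $v'$ with $v'_i=s-1$ (or $v'_i=v_i$ if already $v_i<s$; some care is needed so that we can guarantee $v'_i\le s-1$ strictly, which is where the strict inequality $|s|\le r-5$, giving slack, is used). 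Since the red path will always satisfy (coordinate $i$) $\ge s+1$ until it is deep inside $B_m$ where coordinate $i$ is small, and the green path will satisfy (coordinate $i$) $\le s-1$ likewise, no red vertex can be adjacent to any green vertex \emph{as long as both are outside $B_m$}: adjacency in $\Z^d$ changes a single coordinate by $1$, and here the two paths differ by at least $2$ in coordinate $i$. The endvertices $\pm e_m$ have $i$th coordinate $0$, but these are the only vertices in $B_m$, and $0$ lies strictly between $s-1$ and $s+1$ only when $s=0$; the case $s=0$ therefore needs a separate small argument (route one path to $+e_m$ and the other to $-e_m$ so they approach from opposite sides), but this is routine.

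The key steps, in order: (1) \emph{Reduce to the boundary of a sub-ball.} From $u$, decrease $|u_j|$ for each $j\ne i$, one coordinate at a time, monotonically, stopping so as to land on a point $u''$ with $u''_i=s+1$ and all other coordinates as small as possible — concretely, reach the point $u'' = (s+1)e_i$ scaled appropriately, i.e. the unique lattice point of $B_{r}$ on the ray; more carefully, first adjust coordinate $i$ down to $s+1$ (staying in $B_{r-1}$ since we only decrease $\|\cdot\|_1$... actually increasing coordinate $i$'s neighbours — one must check we never leave $B_{r-1}$; decreasing $|u_i|$ toward $s+1\ge 1$ only decreases $\|\cdot\|_1$, good), then zero out the other coordinates. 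This yields a monotone, hence induced, path from $u$ to $(s+1)e_i =: p^+$ lying in $B_{r-1}$. (2) Do the same for $v$, landing at $p^- := (s-1)e_i$ (using $v_i\le s-1$ or first moving $v_i$ down to $s-1$; the slack $|s|\le r-5$ ensures there is room to do this inside $B_{r-1}$). (3) \emph{Connect $p^{\pm}$ to the poles.} Now $p^+ = (s+1)e_i$ and we want to reach, say, $e_m$ (with $m\le r-10 < s+1$ possible or not — if $s+1 > m$ we walk coordinate $i$ down from $s+1$ to $m$ along the $x_1$... no: we need to reach $e_m=(m,0,\dots,0)$, a point on the \emph{first} axis). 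So from $p^+$ walk coordinate $i$ down and coordinate $1$ up in an interleaved fashion, keeping coordinate $i\ge s+1$... that fails once we must bring coordinate $i$ below $s+1$. The honest fix: keep coordinate $i$ at $s+1$ while moving along coordinate $1$ to build up $x_1 = s+1+m$ or so, \emph{then} finally drop coordinate $i$ from $s+1$ to $0$ in the last $s+1$ steps — but during that final descent the red path has $i$th coordinate $<s+1$, so I must ensure the green path has already terminated at $-e_m$ before (or, since both are fixed finite paths, ensure the green path's vertices with $i$th coordinate near $s$ are far in coordinate $1$ from the red path's final descent). This bookkeeping is the technical heart.

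I expect the main obstacle to be precisely this last point: the separation-in-coordinate-$i$ trick protects the two paths from each other everywhere \emph{except} in the final stretch where each path must bring coordinate $i$ back to $0$ to reach $\pm e_m$. The clean resolution is to perform the two final descents in disjoint regions of coordinate $1$: send the red path to $+e_m$ by first moving to a point with large positive $x_1$ and coordinate $i$ still equal to $s+1$, descend coordinate $i$ there, then walk $x_1$ back down to $m$; send the green path to $-e_m$ symmetrically with large negative $x_1$. During the red descent, $x_1 \approx s+1+m \ge m+1$, while during the green descent $x_1 \le -(m+1)$, so the two descending segments differ by $\ge 2$ in coordinate $1$; elsewhere they differ by $\ge 2$ in coordinate $i$; so no adjacency ever occurs. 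One then checks each path is induced (each is a concatenation of monotone coordinate-segments arranged so that once a coordinate is ``done'' it is never revisited — the standard trick, already used in the proof that Conjecture~\ref{cpath} implies Conjecture~\ref{cmain}, of doing all distance-decreasing steps before any distance-increasing steps, and separating the two ends by $\ge 2$ in the pivotal coordinate) and lies in $B_{r-1}\setminus B_m$ away from its endpoints (each intermediate point has $\ell_1$-norm strictly between $m$ and $r-1$ — this uses $r\ge m+10$ to absorb the $O(1)$ detours). Finally, handle the degenerate cases $s=0$ and $v_i=s$ by minor modifications as indicated above.
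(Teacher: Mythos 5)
Your separation mechanism has a genuine gap, and it is not just bookkeeping. The plan is to keep the red path in the half-space $x_i\ge s+1$ until it sits ``over'' its pole, i.e.\ until it has reached a vertex with $x_i=s+1$ and $x_1\ge m$ (this is forced: the final descent of coordinate $i$ and the walk along the $x_1$-axis into $e_m$ must stay outside $B_m$, and your only protection for the descent is coordinate-$1$ separation from the green descent). But such a staging vertex has $\ell_1$-norm at least $m+s+1$, and the hypotheses allow, say, $m=r-10$ and $s=r-5$, giving norm at least $2r-14>r-1$ for $r\ge 14$; more generally your descent column needs $s\le r-m-2$, while the lemma only gives $|s|\le r-5$. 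So for $m\ge 4$ and $s$ near $r-5$ no such vertex exists in $B_{r-1}$, and the assertion that $r\ge m+10$ ``absorbs the $O(1)$ detours'' is false: the excursion your scheme requires has size of order $s$, i.e.\ possibly of order $r$. There is also a problem at the other end of the range of $s$: the waypoints $(s\pm1)e_i$ of your steps (1)--(2) lie inside $B_m$ whenever $|s|+1\le m$, so the route described there already violates the requirement that interior vertices avoid $B_m$; your final-paragraph revision changes the route but, as above, cannot be carried out for large $s$. (A small point: the hypothesis is strict, $u_i>s>v_i$, so the ``degenerate case $v_i=s$'' never arises.)

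For comparison, the paper uses the hyperplane $x_i=s$ for separation only while the paths stay in the thin annulus $S_{r-1}\cup S_{r-2}$: the red path is routed inside $A_1=(S_{r-1}\cup S_{r-2})\cap\{x_i>s\}$ to $w=(r-2)e_i$ (in the obvious notation) and the green path inside $A_2$ to $-w$, each $A_j$ being connected because $|s|\le r-3$; then each takes two steps inward along the $i$th axis to the antipodal corners $\pm w'$ of $B_{r-4}$. From there on the hyperplane plays no role: every previously placed vertex other than $\pm w'$ has norm at least $r-3$, and the only way a new vertex of $B_{r-4}$ can touch the old vertices is through $\pm w'$, so the two inner continuations, which are reflections of each other through the origin, can transfer from $\pm w'$ to $\pm e_{r-4}$ within the shell $\{x: r-5\le\none{x}\le r-4\}$ (staying in one quadrant of the $x_1x_i$-plane and its antipode) and then run straight along the $x_1$-axis to $\pm e_m$. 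This is the idea your proposal is missing: bring the two paths to antipodal points on the $i$th axis at radius about $r$ first, so that you never need a vertex that is simultaneously far out in coordinate $i$ and far out in coordinate $1$.
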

\begin{proof}
Swapping $u$ and $v$ if necessary, we may assume that
$u_i>s>v_i$. Let $A_1=(S_{r-1}\cup S_{r-2})\cap \{x:x_i>s\}$ and
$A_2=(S_{r-1}\cup S_{r-2})\cap \{x:x_i<s\}$
be the two parts of the `annulus' $S_{r-1}\cup S_{r-2}$ obtained by deleting all points
in the hyperplane $x_i=s$.
Then, since $|s|\le r-3$, each $A_i$ induces a connected subgraph of $\Z^d$,
and $u$ has a neighbour in $A_1$ and $v$ has a neighbour in $A_2$.
Thus we can join $u$ by a red path in $A_1$ to
the point $w=(0,\cdots,r-2,\cdots,0)$, where the non-zero coordinate
is the $i$th. Similarly, we may join $v$ by a green path in $A_2$ to $-w$.
So far, red and green points are on opposite sides of
$\{x:x_i=s\}$, so no red point is adjacent to any green point.
Since $|s|\le r-5$, we may extend the red and green paths by appending two steps towards the origin
to each while remaining on opposite sides of $\{x:x_i=s\}$. The paths now end at opposite corners
$w'$ and $-w'$ of $B_{r-4}$, and each has exactly one vertex in $S_{r-3}$.

Continuing inside $B_{r-4}$ no new vertices added will be adjacent to any existing vertices
other than $w'$ and $-w'$. If $i=1$ we are done: simply continue in a straight line along
the $x$-axis from $w'=e_{r-4}$ to $e_m$ and from $-w'$ to $-e_m$. Otherwise, suppose without loss
of generality that $i=2$. Then we may join $w'$ to $e_{r-4}$ within the set
$\{(x,y,0,\cdots): x\ge 0, y\ge 0, r-5\le x+y\le r-4\}$ and $-w'$ to $-e_{r-4}$ similarly, reflecting
in the origin. Finally, continue in a straight line along the $x$-axis as before.
\end{proof}

So far, we wrote the argument for general $d$ since there were no extra complications. From
now on we consider only $d=3$, although some (but not all!) further parts of our argument
extend easily to higher dimensions.

Note that any points of $S_r\subset \Z^3$ at distance at least $30$, say, automatically
satisfy the assumptions of Lemma~\ref{3far}, since they differ by at least $10$ in some coordinate.
Also, while the statement of Conjecture~\ref{cpath} distinguishes one particular
coordinate (the first), the assumptions of Lemma~\ref{3far} do not. Since
we shall use Lemma~\ref{3far} to prove Conjecture~\ref{cpath} for $d=3$, this
means that from now on we can treat all coordinates as equivalent.

\begin{lemma}\label{3step}
Let $r\ge m+20$ and let $u$ and $v$ be two points in $S_r$ that do not satisfy the assumptions
of Lemma~\ref{3far}. Unless one of $u$, $v$ is a corner of $S_r$ and $\none{u-v}=2$, then
either

(a) we can find neighbours $u'$ and $v'$ of $u$ and $v$ in $S_{r-1}$ so that
there is no edge from $\{u,u'\}$ to $\{v,v'\}$
and $\none{u'-v'}>\none{u-v}$, or

(b) there are paths $P_u$ from $u$ to some $u'\in S_{r-3}$
and $P_v$ from $v$ to some $v'\in S_{r-3}$ such that
$u'$ and $v'$ satisfy the assumptions of Lemma~\ref{3far} with $r$ replaced by $r-3$,
$P_u$ and $P_v$ are contained in $S_{r-1}\cup S_{r-2}$ apart from their endpoints,
and $P_u$ and $P_v$ are at distance at least $2$ from each other.
\end{lemma}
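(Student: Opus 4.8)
The plan is a finite case analysis, kept tractable by first pinning down how close $u$ and $v$ must be and then exploiting the full coordinate/sign symmetry noted just after Lemma~\ref{3far}. The first step is to extract from the failure of the hypotheses of Lemma~\ref{3far} a bound $\none{u-v}=O(1)$. Negating those hypotheses says that for every coordinate $i$ no integer $s$ with $|s|\le r-5$ lies strictly between $u_i$ and $v_i$. If $|u_i-v_i|\ge 2$ this forces the (non-empty) block of integers strictly between $u_i$ and $v_i$ to avoid $[-(r-5),r-5]$, hence $\{u_i,v_i\}$ to lie within distance $6$ of a single endpoint $\pm r$; since $u,v\in S_r\subset\Z^3$ this can happen for at most one coordinate, and then the other two coordinates of each of $u$ and $v$ have absolute values summing to at most $6$. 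In all cases $\none{u-v}$ is bounded by an absolute constant, and $u,v$ are ``localised'' near a common face, edge or corner of the octahedron $S_r$.

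Using the interchangeability of the six signed coordinate directions, the pair $(u,v)$ then falls into a bounded list of combinatorial types, classified by whether each of $u,v$ lies in the interior of a face of $S_r$ (all coordinates nonzero), on an edge (exactly one zero coordinate) or at a corner (two zero coordinates), together with the small vector $u-v$. For the ``generic'' types I would establish alternative (a): each non-corner point of $S_r$ has at least two neighbours in $S_{r-1}$, obtained by decrementing the magnitude of one of its (at least two) nonzero coordinates, and one can choose $u'$ and $v'$ among these so that $\none{u'-v'}>\none{u-v}$ while no edge joins $\{u,u'\}$ to $\{v,v'\}$ --- the latter because, after a suitable choice, $u'$ and $v'$ remain strictly separated by a coordinate hyperplane that already separated $u$ from $v$. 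The content of the generic case is that (a) can fail only when both $u$ and $v$ are so constrained in their descent that every admissible step keeps them parallel or brings them together, and, given the localisation above, this happens only in a short explicit list of near-corner configurations.

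For those remaining configurations I would verify alternative (b) by exhibiting explicit descent paths $P_u,P_v$ from $u,v$ to $S_{r-3}$, contained apart from their endpoints in $S_{r-1}\cup S_{r-2}$, that steer one of the two points into the opposite half-space of some coordinate: after at most one or two steps taken near the crowded corner region, $P_u$ is bent so that its running coordinate (say the third) decreases past $0$ while $P_v$ keeps that coordinate positive, so that the endpoints $u',v'\in S_{r-3}$ differ by at least $6$ in that coordinate and hence satisfy the hypotheses of Lemma~\ref{3far} with $r-3$ in place of $r$. The only type not covered by (a) or (b) is exactly the stated exception --- one of $u,v$ a corner of $S_r$ with $\none{u-v}=2$ --- which is the genuine difficulty, the $\Z^3$ analogue of the obstruction in \Fg{2}.

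The step I expect to be the main obstacle is the verification, in the near-corner (b) cases, that $P_u$ and $P_v$ can be kept at $\ell_1$-distance at least $2$ throughout: near the corner both paths are forced through essentially the same handful of vertices, so the choice of ``escape'' coordinate and the exact order of the descent steps must be made carefully, and it is here that the explicit constant bound on $\none{u-v}$ and a fixed bookkeeping convention --- separate the two paths in one chosen coordinate as early as possible, then descend monotonically --- do the real work.
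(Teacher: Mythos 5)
Your overall strategy---bound $\none{u-v}$ by a constant using the failure of the hypotheses of Lemma~\ref{3far}, reduce by coordinate/sign symmetry, give alternative (a) generically by decrementing one nonzero coordinate of each point, and fall back on alternative (b) with explicit descent paths for a short list of exceptional configurations---is the same kind of argument as the paper's, and your opening localisation step is correct. But the proposal leaves a genuine gap: the steps that actually constitute the proof are not carried out. Your justification of the generic case (``$u'$ and $v'$ remain strictly separated by a coordinate hyperplane that already separated $u$ from $v$'') does not by itself give the required non-adjacency of $\{u,u'\}$ to $\{v,v'\}$ (it says nothing about $u'$ versus $v$, or $v'$ versus $u$), nor the strict increase $\none{u'-v'}>\none{u-v}$; and in fact (a) genuinely fails for a non-corner pair, namely (up to symmetry) $u=(r-1,0,1)$, $v=(r-1,1,0)$: the only candidate neighbours in $S_{r-1}$ are $(r-2,0,1),(r-1,0,0)$ for $u$ and $(r-2,1,0),(r-1,0,0)$ for $v$, and every choice either creates an adjacency through $(r-1,0,0)$ or leaves $\none{u'-v'}=2$. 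The paper isolates exactly this pair by explicit subcases (after reducing to all coordinates nonnegative and $0<a_1\le b_1$, forcing $u=(x,y,1)$, $v=(x,y+1,0)$ and then $y=0$) and resolves it by concrete five-step paths to $u'=(r-5,0,2)$ and $v'=(r-5,2,0)$. You never pin down the exceptional configurations, so your ``short explicit list'' is not actually exhibited or checked.

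The same applies to the remaining corner case ($u$ a corner, $\none{u-v}>2$): the paper first deduces $r-5\le v_3\le r-2$ from the failure of Lemma~\ref{3far}, then writes down a dog-leg path for $u$ through $(0,0,r-2)$ and $(0,-1,r-2)$ and three straight downward steps for $v$, so that the endpoints are separated in sign in the second coordinate (which is what the hypotheses of Lemma~\ref{3far} with $r-3$ require---note that a gap of $6$ in a coordinate is not by itself enough if both values are near $\pm r$), and the distance-$\ge 2$ property of the two paths is verified directly from the coordinates. You explicitly defer precisely this verification as ``the main obstacle''; since the lemma is a finite case-check whose entire content is the list of exceptional pairs together with these explicit constructions and adjacency checks, deferring it leaves the proof incomplete rather than merely unpolished. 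To repair the proposal you would need to (i) carry out the generic case with explicit decrement choices (the paper's subcases (i), (ii) and the derivation above), and (ii) write down and verify the two families of (b)-paths.
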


In checking the details of this and the next proof, it is perhaps helpful to bear in
mind that if $u$ and $v$ are adjacent, then $\none{u}$ and $\none{v}$ differ by exactly $1$.

\begin{proof}
Let $u=(a_1,a_2,a_3)$ and $v=(b_1,b_2,b_3)$. Since $u$ and $v$ do not
satisfy the assumptions of Lemma~\ref{3far}, there is no coordinate in which
one is positive and the other negative. Thus
we may assume without loss of generality that $a_i\ge 0$ and $b_i\ge 0$ for all $i$.

Suppose first that neither $u$ nor $v$ is at a corner of $S_r$.
Then at least two of the $a_i$ and at least two of the $b_i$ are strictly
positive, so there is some coordinate in which both are positive,
and we may assume that $0<a_1\le b_1$.

Suppose that (i) $0<b_2\le a_2$. Then we may take $u'=(a_1-1,a_2,a_3)$ and $v'=(b_1,b_2-1,b_3)$
and we are done. Similarly, if (ii) $0<b_3\le a_3$, take the same $u'$ and $v'=(b_1,b_2,b_3-1)$.
Thus we may assume that neither (i) nor (ii) holds.

Since $\none{u}=\none{v}=r$ and $b_1\ge a_1$, without loss of generality we have $b_3<a_3$.
Now we must have $b_3=0$, otherwise (ii) holds. Thus $b_2>0$ (recall than at most one $b_i$
may be zero), and thus $b_2>a_2$ as otherwise (i) holds. Now $a_3>b_3=0$, so $a_3\ge 1$.
If $a_3\ge 2$, then the conditions of Lemma~\ref{3far} are satisfied with $i=3$ and $s=1$.
Thus $a_3=1$. Since $\none{u}=\none{v}$, $b_1\ge a_1$ and $b_2>a_2$, it follows that
$b_1=a_1$ and $b_2=a_2+1$, so $u=(x,y,1)$ and $v=(x,y+1,0)$ for some $x>0$ and $y\ge 0$.

If $y>0$ then we may take $u'=(x,y-1,1)$ and $v'=(x-1,y+1,0)$. Otherwise
$u=(r-1,0,1)$ and $v=(r-1,1,0)$. We build a path $P_u$ starting from $u$
going via $(r-2,0,1)$, $(r-3,0,1)$, $(r-3,0,2)$ and $(r-4,0,2)$ to $u'=(r-5,0,2)$,
and construct the analogous path $P_v$ (with $y$- and $z$-coordinates swapped) from $v$ to $v'=(r-5,2,0)$.
These paths satisfy the second alternative (b) in the conclusion of the lemma.

It remains to handle case where one of $u$ and $v$ is a corner of $S_r$. Suppose
without loss of generality that $u=(0,0,r)$.
Then (by the assumption that in this case $u$ and $v$ are at distance
more than $2$) $v=(b_1,b_2,b_3)$ with $b_3\le r-2$.
Note also that $b_3\ge r-5$, as otherwise the conditions of Lemma~\ref{3far} hold;
hence, crudely, $b_3\ge 3$.
We may assume without loss of generality that $b_2>0$ and $b_1\ge 0$, say.
Thus $b_1+b_2=r-b_3\ge 2$.

Build a path from $u$ to $(0,0,r-1)$, $(0,0,r-2)$, $(0,-1,r-2)$
$(0,-1,r-3)$ and $u'=(0,-1,r-4)$, and a path from $v$ to $v'$
by taking $3$ steps in the negative $z$ direction.
These paths satisfy the second alternative in the conclusion of the lemma.
\end{proof}

We now turn to the main result of this section.

\begin{proof}[Proof of Conjecture~\ref{cpath} for $d=3$]
We shall prove the result with $r=r(3,m)=m+c$ where $c$ is some absolute constant that
we shall not optimise.
Recall that we are given red and green paths $P_R$ and $P_G$
starting outside $B_{r+2}$ and ending at neighbours of the origin, and we must modify the paths
in a certain way inside $B_r$. It will be convenient to shift the index: replacing $r$ by $r+1$
our paths start outside $B_{r+3}$, and we are allowed to modify them inside $B_{r+1}$.

In the following argument, a key role will be played by the points
of the paths $P_R$ and $P_G$ that lie in $S_r$ and come before the first time the 
relevant path
enters $B_{r-1}$. To avoid double subscripts, we shall label these
points as $R_1,\ldots,R_s$ in order along $P_R$ and $G_1,\ldots,G_t$
along $P_G$. Thus $R_i$ is the $a_i$th point of $P_R$ for some $a_1<a_2<\cdots<a_s$,
the $(a_s+1)$st point of $P_R$ is in $S_{r-1}$, and all other points of $P_R$
before the $(a_s+1)$st are outside $B_r$.

Let (for the moment) $u=R_1$ and $v=G_1$. If these points satisfy the assumptions
of Lemma~\ref{3far}, then we may delete all points of $P_R$ and $P_G$ in $S_r$ other
than $u$ and $v$, and use Lemma~\ref{3far} to continue the paths inside $B_r$ in the required manner.
Indeed, all new vertices added are inside $B_{r-1}$, and so have no neighbours on what
remains of $P_R$ and $P_G$ except $u$ and $v$.
Similarly, if $u$ and $v$ satisfy the assumptions of Lemma~\ref{3step} then we may apply
that lemma a bounded number of times to arrive at a situation to which Lemma~\ref{3far}
applies, with $r$ reduced by at most $40$, say, and so $r\ge m+10$ still, as required.
We may thus assume that neither Lemma~\ref{3far} nor Lemma~\ref{3step} applies,
so one of $R_1$ and $G_1$ is a corner of $S_r$ and $\none{R_1-G_1}=2$.

Let us say that the quadruple $(i,j,u',v')$ is \emph{good} if the following conditions hold:

 (i) $i\le s$ and $j\le t$, so $R_i$ and $G_j$ are defined,

 (ii) $u'\in S_{r-1}$ is a neighbour of $R_i$ and $v'\in S_{r-1}$ is a neighbour of $G_j$,

 (iii) $u'\ne v'$
and either $\none{u'-v'}>2$ or neither $u'$ nor $v'$ is a corner of $S_{r-1}$, and

(iv) $u'$ is not adjacent to any $G_k$, $k\le j$, and
$v'$ is not adjacent to any $R_k$, $k\le i$.

If such a good quadruple exists then we may delete all points of the original
paths in $S_r$ other than the points $R_k$, $k\le i$, and $G_k$, $k\le j$,
continue from $u=R_i$ to $u'$ and from $v=G_j$ to $v'$, and then apply Lemmas~\ref{3far}
and~\ref{3step} as above. Hence we may assume that no good quadruple exists.

\begin{claim}\label{cldiag}
Under the assumptions above, either
$R_1$ is a corner of $B_r$ and $R_1,G_1,R_2,G_2,\dots,R_{r/4},G_{r/4}$
are successive vertices along one edge of the octahedron $B_r$,
or the same situation holds with red and green swapped.
\end{claim}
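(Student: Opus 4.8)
The plan is to show that the non-existence of a good quadruple forces the two paths to march in lockstep down one edge of the octahedron $B_r$, by induction on how far along the paths we have gone. The base case is the hypothesis we already have: neither Lemma~\ref{3far} nor Lemma~\ref{3step} applies to $(R_1,G_1)$, so one of $R_1,G_1$ is a corner of $S_r$ and $\none{R_1-G_1}=2$. Without loss of generality say $R_1$ is a corner, say $R_1=(0,0,r)$, and $G_1=(b_1,b_2,b_3)$ with $b_3=r-1$ (since $\none{R_1-G_1}=2$ and $R_1,G_1$ are both in $S_r$, $G_1$ must be a neighbour-of-a-neighbour of the corner lying in $S_r$; the remaining cases, e.g. $G_1$ a corner too or on an adjacent face, are easily excluded because they would make a good quadruple available — I would check these cases explicitly). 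So $G_1=(1,0,r-1)$ say, after symmetry.

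The inductive step is the heart of the matter. Suppose $R_1,G_1,\dots,R_k,G_k$ have been shown to lie successively along the edge of $B_r$ from $(0,0,r)$ towards $(r,0,0)$, i.e. $R_j=(2j-2,0,r-2j+2)$ and $G_j=(2j-1,0,r-2j+1)$ (possibly swapping which of $R,G$ starts at the corner). I want to deduce that $R_{k+1}$ and $G_{k+1}$ are the next two lattice points on this edge. The idea: consider the neighbours of $R_k$ in $S_{r-1}$ and the neighbours of $G_k$ in $S_{r-1}$, and all the ways $P_R$ could leave $R_k$ and $P_G$ could leave $G_k$. Using that $P_R,P_G$ are induced paths with no red vertex adjacent to a green vertex, and that the portion of each path in $S_r$ before entering $B_{r-1}$ consists of $R_1,\dots,R_s$ (resp.\ $G_1,\dots,G_t$), I enumerate the possible continuations. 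The key point is that any continuation other than ``stay on the edge'' either makes one of the paths enter $B_{r-1}$ already (contradicting that $R_{k+1}$, $G_{k+1}$ exist, unless we are at the end, which is handled by the $r/4$ cutoff), or exposes a neighbour in $S_{r-1}$ of $R_k$ (resp.\ $G_k$) that can serve as $u'$ (resp.\ $v'$) in a good quadruple $(k,k,u',v')$ — one checks condition (iv) holds because the only red and green points in $S_r$ seen so far lie along the edge, and a carefully chosen neighbour avoids being adjacent to all of them. Thus non-existence of a good quadruple pins down $R_{k+1},G_{k+1}$ uniquely as the next edge points.

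I expect the main obstacle to be the case analysis in the inductive step: one must rule out, at each stage, not just continuations of $P_R$ and $P_G$ that wander off the edge into the interior, but also continuations that stay in $S_r$ but move along the face rather than the edge, and one must verify in each such case that an appropriate $u'$ or $v'$ exists satisfying all of (i)--(iv), in particular the adjacency-avoidance in (iii) and (iv). The geometry is rigid enough — on the edge from $(0,0,r)$ to $(r,0,0)$ the coordinates are tightly constrained, and the red/green non-adjacency forces the two paths to stay ``two apart'' — that each case should collapse quickly, but there are several sub-cases (depending on which coordinate a step changes, and on whether $R_k$ or $G_k$ is currently ``ahead''), and keeping the bookkeeping clean is where the real work lies. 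Finally, the cutoff at $k=r/4$ is just to guarantee we never run out of edge before the argument would otherwise continue, and to ensure $r-2k$ stays safely larger than $m$; I would remark that once the claim is established, the paths are so constrained that the remaining analysis (continued in the next part of the paper) can take over.
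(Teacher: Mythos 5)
Your overall strategy (induct along the edge of the octahedron, using the non-existence of a good quadruple to force the lockstep pattern) is the same as the paper's, but the dichotomy you propose for the inductive step is too strong, and it fails in exactly the sub-case that constitutes the real content of the proof. You assert that any continuation other than ``stay on the edge'' either enters $B_{r-1}$ or exposes a good quadruple at the current index pair $(k,k)$. Consider the pattern down to $R_k=(n,0,r-n)$, $G_\ell=(n-1,0,r-n+1)$, and suppose the green path next returns to $S_r$ at the sideways point $G_{\ell+1}=(n,1,r-n-1)$. Then \emph{both} neighbours of $R_k$ in $S_{r-1}$, namely $(n-1,0,r-n)$ and $(n,0,r-n-1)$, are adjacent to green points ($G_\ell$ and $G_{\ell+1}$ respectively), so condition (iv) cannot be satisfied by any quadruple whose first index is $k$: no good quadruple exists at this stage, yet the pattern has not continued along the edge either. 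The paper handles this by looking further along the paths: it first shows $R_{k+1}$ is defined (the red path cannot drop into $B_{r-1}$ from $R_k$ precisely because both candidate entry points have green neighbours) and then hunts for a good quadruple $(k+1,\ell+1,u',v')$; even this fails in one residual sub-case ($R_{k+1}=(n-1,1,r-n)$ with $n=1$), which requires going to $R_3$ and a small counting argument showing two of the three free neighbours of $G_2$ must be green. None of this look-ahead is present in your plan, and without it the induction does not close.

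A related, smaller inversion: you treat the existence of $R_{k+1}$ and $G_{k+1}$ as a hypothesis whose failure gives a contradiction, but in the paper their existence is a conclusion that must be derived at each stage --- the successor of the current endpoint in $S_{r-1}$ is always adjacent to a vertex of the other colour, so the path is forced back out through $S_{r+1}$ and must later recross $S_r$ before entering $B_{r-1}$. So the skeleton of your argument is right, and your base case and the role of the $r/4$ cutoff are fine, but the inductive step needs to be restructured around advancing the indices asymmetrically (from $(k,\ell)$ to $(k,\ell+1)$ and then $(k+1,\ell+1)$) rather than symmetrically at $(k,k)$, together with the special treatment of the step nearest the corner.
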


We prove the claim by induction. Recalling that one of $R_1$ and $G_1$ is a corner of $B_r$
and the other is at distance $2$ from it, suppose that we have the pattern in the claim,
from the corner $(0,0,r)$ down to the point $(n,0,r-n)$, $n\ge 1$. We shall show by induction that
if $n<r/2$, say, then the pattern continues one more step.
Swapping the colours if necessary, suppose that $(n,0,r-n)$ is red; in particular,
$(n,0,r-n)=R_k$ where $k=\floor{n/2}+1$, and $(n-1,0,r-n+1)=G_\ell$ where $\ell=\floor{(n-1)/2}+1$.
Our aim is to show that $G_{\ell+1}$ is defined and is equal to $(n+1,0,r-n-1)$.

The only neighbours of $G_\ell$ in $B_{r-1}$ are the points
$(n-1,0,r-n)$ and (if $n\ge 2$) $(n-2,0,r-n+1)$. The former has a red neighbour, namely
$R_k$. If $n\ge 2$, then $(n-2,0,r-n+1)$ also has a red neighbour, namely $R_{k-1}=(n-2,0,r-n+2)$.
Hence the successor of $G_\ell$ on the green path is outside $B_r$,
and $G_{\ell+1}$ (the next time the path returns to $B_r$) is defined.

Suppose that we can find distinct neighbours $u'$ of $u=R_k$ and $v'$ of $v=G_{\ell+1}$ in $S_{r-1}$
such that

 (*) $u'$ is not adjacent to any of $G_1,\ldots,G_{\ell+1}$ and $v'$
is not adjacent to any of $R_1,\ldots,R_k$.

\noindent
Then the quadruple $(k,\ell+1,u',v')$ is good. Indeed, the only remaining condition
to check in the definition of a good quadruple is (iii). To verify this note that
neither $u'$ nor $v'$ is the corner $(0,0,r-1)$ (which is adjacent to $R_1$ and to $G_1$),
so either they are far apart or neither is a corner of $S_{r-1}$. The condition (*)
is very easy to check, since $A=\{R_1,\ldots,R_{k-1},G_1,\ldots,G_\ell\}
=\{(m,0,r-m), 0\le m\le n-1\}$. Moreover, the neighbourhood of $A$ in $S_{r-1}$ is
simply $B=\{(m,0,r-m-1), 0\le m\le n-1\}$. Hence, to satisfy (*) it suffices
to ensure that $u',v'\notin B$, that $u'\ne v'$, and that neither $u'v$ nor $v'u$ is an edge.

Consider choosing $u'=(n,0,r-n-1)$ as the neighbour of $u=R_k$.
If $v=G_{\ell+1}$ does not have a strictly positive $z$-coordinate, then there
is no problem: $v$ is far from all relevant vertices in $B_r$ and we may take any neighbour
$v'$ of $v$ in $S_{r-1}$.
Otherwise, consider taking $v'=v-(0,0,1)$. Since $v\notin A$, we have $v'\notin B$,
and these choices work unless either $u'v$ or $v'u$ is an edge, which
(bearing in mind that the $G_i$ and $R_j$ are distinct)
happens precisely
when $v=G_{\ell+1}$ is one of the points
$(n+1,0,r-n-1)$ or $(n,\pm 1, r-n-1)$. The first is what we are trying to establish (that the
pattern continues along the edge of $B_r$). So suppose without loss of generality
that $G_{\ell+1}=(n,1,r-n-1)$.

Now the only neighbours of $R_k=(n,0,r-n)$ in $S_{r-1}$ are $(n-1,0,r-n)$ and
$(n,0,r-n-1)$. Since each has a green neighbour ($G_\ell$ or $G_{\ell+1}$),
the red path cannot have entered $B_{r-1}$ after $R_k$, so $R_{k+1}$ is defined.
Now we attempt to enter $B_{r-1}$ from $u=R_{k+1}$ and $v=G_{\ell+1}=(n,1,r-n-1)$,
i.e., to find a good quadruple $(k+1,\ell+1,u',v')$.
As before, taking $u'=u-(0,0,1)$ and $v'=v-(0,0,1)=(n,1,r-n-2)$ works
unless $u'v$ or $v'u$ is an edge of $\Z^3$, since $u',v'\notin B$.
Now $v'u$ is an edge only if $u=(n+1,1,r-n-2)$ or $u=(n,2,r-n-2)$.
But in both cases the quadruple $(k+1,\ell+1,u',v''=(n-1,1,r-n-1))$
is good.
So we may assume that $u'v$ is an edge, and thus that $R_{k+1}=u=(n-1,1,r-n)$.
If $n\ge 2$ then there is no problem: keeping the same $v'$, take $u''=(n-2,1,r-n)$.

The only remaining case in the proof of Claim~\ref{cldiag} is the last one
above with $n=1$. In other words, $G_1=(0,0,r)$, $R_1=(1,0,r-1)$, $G_2=(1,1,r-2)$
and $R_2=(0,1,r-1)$. Now both neighbours of $R_2$ in $B_{r-1}$ have a green neighbour,
so the red path cannot have entered $B_{r-1}$ after $R_2$, and hence $R_3$ is defined.
Now $G_2$ has only three neighbours not known to have a red neighbour,
namely $(2,1,r-2)$, $(1,1,r-3)$ and $(1,2,r-2)$. Hence two of these must be green.
It follows that $R_3$ cannot be $(2,1,r-3)$ or $(1,2,r-3)$ (since each has
two neighbours in a set of three points containing at least two green points).
Now we claim that (unless $R_3$ has $z$-coordinate less than or equal to $0$,
but then it is far from $G_1$, $G_2$ and there is no problem) the quadruple
$(3,2,u',v')$ is good, where $u=R_3$, $u'=u-(0,0,1)$, $v=G_2=(1,1,r-2)$ and $v'=(1,1,r-3)$.
Indeed, the only neighbours of $v'$ (other than $v$) in $S_r$ are $(2,1,r-3)$ and $(1,2,r-3)$,
neither of which is $R_1$, $R_2$ or $R_3$, and for $u'$ to be a neighbour in $S_{r-1}$
of $G_1$ or $G_2$ we would have to have $u'=(0,0,r-1)$, $(1,0,r-2)$ or $(0,1,r-2)$
in which case $u$ is one of the points $G_1$, $R_1$, $R_2$, which is impossible
since $R_1$, $R_2$, $R_3$ and $G_1$ are distinct. This completes the proof of Claim~\ref{cldiag}.

\medskip
With Claim~\ref{cldiag} in hand, we continue with the proof of the
case $d=3$ of Conjecture~\ref{cpath}.
As noted near the start of the proof, we may assume there is no good quadruple. Hence,
by Claim~\ref{cldiag}, we may assume that $R_1=(0,0,r)$, $G_1=(1,0,r-1)$,
$R_2=(2,0,r-2)$, $G_2=(3,0,r-3)$ and $R_3=(4,0,r-4)$.
Since red and green points cannot be adjacent, the points on
the green path before and after $G_1$ must be
$(1,\pm1,r-1)$; these are the only neighbours of $(1,0,r-1)$
that are not adjacent to a red point. Similarly the points
before and after $G_2$ are $(3,\pm1,r-3)$, and the points
adjacent to $R_2$ on the red path must be $(2,\pm1,r-2)$.
We shall assume without loss of generality that
$(2,1,r-2)$ precedes $R_2$ on the red path.
Since $(1,1,r-1)$ and $(3,1,r-3)$ are both green, the only neighbours
of $(2,1,r-2)$ without green neighbours are $(2,0,r-2)=R_2$ and
$(2,2,r-2)$; hence this last point is also red, and comes just before $(2,1,r-2)$ on the red path.

Truncate the red path $P_R$ at $(2,1,r-2)$, the point before $R_2$. (More precisely, to keep a bound on the range
of our modification, delete all points of $P_R\cap B_{r+1}$ that come after
$(2,1,r-2)$ along $P_R$.) Similarly, truncate the green path at $G_1=(1,0,r-1)$.
Now continue the green path to $u=(1,0,r-2)\in S_{r-1}$ and the red path
to $v=(2,1,r-3)\in S_r$ and then to $w=(2,1,r-4)\in S_{r-1}$. At this
point $u$ and $w$ are the only coloured vertices in $S_{r-1}$.
The coloured vertices in $S_r$ are precisely $R_1=(0,0,r)$, $G_1=(1,0,r-1)$
and $v$. Hence there is no red-green adjacency between $S_{r-1}$ and $S_r$.
The only newly coloured point in $S_r$ is $v$, which is red. Its neighbours
in $S_{r+1}$ are $(2,1,r-2)$ which was and is red, $(3,1,r-3)$
which was green but, from truncating the green path, is now uncoloured,
and $(2,2,r-3)$ which cannot have been green since $(2,2,r-2)$ was red.
Thus the red and green paths to $u$ and $w$ remain at distance $2$ and,
since neither $u$ nor $w$ is a corner of $B_{r-1}$, we can continue them inside $B_{r-1}$
by a case previously covered.
\end{proof}

\section{Further percolation models}\label{sec_further}

In this section we briefly outline a proof of Theorem~\ref{thextra}.
Since this is rather easy,
and not our main focus, we do not give full details.

First consider site percolation on the triangular lattice $T$. The definitions adapt in a very natural
way; for example, one can apply a linear map to map the vertex set of $T$ to $\Z^2$. Then nothing changes
in the definition of an essential enhancement except the underlying graph. To prove this case of
Conjecture~\ref{cmain}/Theorem~\ref{thextra} we need an analogue of Lemma~\ref{inside},
whose statement and proof we omit, and the following analogue of Conjecture~\ref{cpath}.
Here $\delta$ denotes the graph distance in $T$, and $B_r=B_r(0)=\{v:\delta(v,0)\le r\}$
where $0=(0,0)$ is the origin.

\begin{theorem}
For any $m>0$ there exists some $r>m$ such that the following holds.
Assume we have an induced path $P$ in the $2$-dimensional
triangular lattice through the origin $(0,0)$ joining points $a$ and $b$ that
lie outside $B_{r+2}$. Then by adding and deleting vertices
within $B_r$ we can obtain a set $S$ of vertices such that
$S\cap B_m=\{(i,0):i=-r,\dots,r\}$ with the property that any path joining $a$ to $b$ in $S$
goes via $(0,0)$.
\end{theorem}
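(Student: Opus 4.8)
The plan is to mimic the structure of the proof of Conjecture~\ref{cpath} for $d=2$, exploiting the fact that the triangular lattice $T$ is a planar triangulation of the plane whose balls $B_r$ are (combinatorial) hexagons. First I would set $r=r(m)=m+c$ for a suitably large absolute constant $c$, and reformulate the statement in the single-path form given just after Conjecture~\ref{cpath}: given one induced path $P$ through the origin with both ends outside $B_{r+3}$, colour the two arms of $P\setminus\{0\}$ red and green, and then it suffices to reroute the red and green arms inside $B_r$ so that one ends at $-e_m$, the other at $e_m$, neither meets $B_m$ elsewhere, and no red vertex is adjacent to any green vertex — because then the straight segment from $-e_m$ to $e_m$ along the $x$-axis is the unique connection in $S$ between $a$ and $b$, all of it forced through $0$.

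Next I would truncate the red and green arms at their first vertices $R_a$, $G_b$ in $B_r$ (or in a slightly smaller hexagon $O_r$, obtained by snipping off the six corners of $B_r$, to avoid the planarity-breaking behaviour at corners exactly as in the $\Z^2$ proof). As in \Fg{3}(a), the annulus $O_{r-1}\setminus O_{r-3}$ contains a cycle $C$ covering the outermost layer; the key observation is that in a planar triangulation the vertices of $C$ have a cyclic order, and two short red/green stubs leaving $R_a$ and $G_b$ to neighbours $u,v$ on $C$ split $C$ into two arcs, exactly one of which can be traversed in red and the other in green without any red–green adjacency, provided $u$ and $v$ are not too close together. I would then argue by the same three-step case split: (1) if $R_a$ and $G_b$ are far apart, run red and green around the two arcs of $C$ to $\pm e_{r-2}$ and join those by the $x$-axis segment; (2) if they are within bounded distance but not in the degenerate corner configuration, pick neighbours $u,v$ on $C$ with no red–green edge between $\{R_a,u\}$ and $\{G_b,v\}$, push two steps inward to $O_{r-3}$ to separate them, and reduce to case (1) with $r$ replaced by $r-3$; (3) in the finitely many degenerate configurations where $R_a$ is a corner of $O_r$ and $G_b$ is close to it, the original path forces the predecessor and successor of $R_a$ outside $O_r$, and one checks by a small explicit figure (the analogue of \Fg{3}(b)) that the paths can be pushed into $O_{r-1}$ to reach $O_{r-2}$ at non-degenerate positions, reducing to case (2).

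The main obstacle, and the only place where the triangular lattice differs substantively from $\Z^2$, is the local combinatorics of case (3) together with the fact that each interior vertex of $T$ has six neighbours and unit balls are hexagons rather than diamonds: one has to verify that at a hexagon corner the unique forced local picture of the red and green arms still leaves room to reroute both into the interior while keeping them non-adjacent. I expect this to be a short but fiddly finite check — a handful of corner configurations, each resolved by an explicit picture — rather than anything deep, precisely because (unlike $\Z^3$ with the $\ell_\infty$ metric) $T$ is planar and its balls have no ``thin'' features away from the six corners. Everything else (the cycle $C$, the far-apart case, the inward-push reduction) is a direct transcription of the $d=2$ argument, using only planarity of $T$ and the fact that $B_r$ is a convex hexagonal region of the triangulation.
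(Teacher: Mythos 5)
Your outline would work, but it is a genuinely different (and considerably heavier) route than the paper's. You transcribe the whole $\Z^2$ machinery — a corner-snipped hexagon $O_r$, a cycle $C$ in an annulus, and a three-way case split culminating in a deferred ``fiddly finite check'' of degenerate corner configurations. The paper instead exploits two features special to the triangular lattice that make all of this unnecessary: the sphere $S_r=B_r\setminus B_{r-1}$ is itself a cycle in which \emph{consecutive vertices are adjacent}, and every vertex of $S_r$ (corners included) has a neighbour in $S_{r-1}$ whose only other neighbours on $S_r$ are the two cycle-neighbours of that vertex. Consequently the paper's proof is a few lines: let $R_1$, $G_1$ be the first entry points of the two arms of $P$ into $B_r$ (non-adjacent, since $P$ is induced and they are separated by $0$, hence at cycle-distance at least $2$ on $S_r$); delete all other vertices of $P$ in $B_r$; step each of $R_1$, $G_1$ inward to points $u,v\in S_{r-1}$ at distance at least $2$ with no red--green adjacency; run from $u$ around the cycle $S_{r-1}$, moving away from $v$, to a point roughly opposite $v$; drop both ends by paths of length $2$ into $S_{r-3}$ and connect them within $S_{r-3}$ to $(\pm(r-3),0)$, finishing with the straight segment through the origin. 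This gives $r=m+4$ with no case analysis at all. In particular your case (3) — the only step you leave unverified — is in fact vacuous: if a red arm sits at a hexagon corner $c$ of $S_r$, the unique inward neighbour $w\in S_{r-1}$ of $c$ has no neighbours on $S_r$ other than $c$ and the two vertices of $S_r$ adjacent to $c$, and those cannot be green (they are adjacent to the red vertex $c$); so the blocking phenomenon that forces the corner analysis in $\Z^2$ (and the figure~\ref{f:3}(b)-type rescue) simply cannot occur in $T$. So your proposal is sound, and your identification of the corner combinatorics as the only point of difference from $\Z^2$ is accurate, but the check you postpone should either be carried out or, better, replaced by the observation above, which collapses the argument to the paper's short one.
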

\begin{proof}
Starting from $a$ consider the first (red) point $R_1$ at which $P$ enters $B_r$.
Similarly, let $G_1$ be the first (green) point of $P$ in $B_r$ that we reach starting from $b$.
Since $R_1$ and $G_1$ come before and after $0$ on the induced path $P$, they are not adjacent,
i.e., they are at graph distance at least $2$ in the hexagon~$S_r=B_r\setminus B_{r-1}$.
Remove all points of $P$ in $B_r$ except for $R_1$ and $G_1$.
It is easy to check that we can join $R_1$ to a point $u$ and $G_1$
to a point $v$ with $u,v\in S_{r-1}$ and $u$, $v$ at distance
at least $2$, so that $v$ is not a neighbour of $R_1$ and $u$ is not a neighbour
of $G_1$. Now $S_{r-1}$ is a cycle, so within $S_{r-1}$ we can proceed from $u$ moving
away from $v$ to reach the point $v'$ opposite $v$, say. Now it is easy to connect
$v'$ and $v$ by paths of length $2$ to two opposite points in $S_{r-3}$ and
then these points within $S_{r-3}$ to $(r-3,0)$ and $(-(r-3),0)$.
The result follows taking $r=m+4$, say.
\end{proof}

Finally, let us briefly discuss bond percolation on $\Z^d$. This turns out to be much easier
than site percolation for the following reason. In the arguments in the previous section we are
often faced with a task of the following form: connect point $a$ to point $b$ and point $c$ to point $d$
without connecting point $a$ to point $c$, and with some constraints as to how we can modify
an existing configuration. In either site or bond percolation the connections from $a$ to $b$
and from $c$ to $d$ will be open paths $P_1$ and $P_2$. In site percolation, such paths can relatively easily
`accidentally' connect $a$ to $c$ -- if any site on $P_1$ is adjacent to any site on $P_2$.
In bond percolation, $P_1\cup P_2$ contains an open path from $a$ to $c$ if and only if $P_1$
and $P_2$ share a vertex, and this is much easier to avoid.

For bond percolation there are several variants of the definition
of an enhancement (for example, the rule may
apply at every bond, or at every site). In all cases it is easy to prove the analogue
of Lemma~\ref{inside}, so to prove Theorem~\ref{thextra} we need the analogue of Conjecture~\ref{cpath}.
But this is essentially trivial! Indeed, given the two (red and green in our previous terminology)
paths $P_R$ and $P_G$ starting outside $\Binf_{r+2}$ and ending at (or near) the origin,
we may truncate them as follows: $P_R$ first hits $\Binf_r$ at some site $u$, along the edge $u'u$,
$u'\notin \Binf_r$. Similarly $P_G$ hits $\Binf_r$ at some site $v$ along an edge $v'v$.
Now delete all bonds inside $\Binf_r$, and also all bonds incident with $\Binf_r$
apart from $u'u$ and $v'v$. It remains only to connect $u$ and $v$ `cleanly' within $\Binf_r$: this will
never create any unwanted connections to the outside. But $\Sinf_r$ is $2$-connected,
so we may join $\{u,v\}$ and $\{e_r,-e_r\}$ by two vertex-disjoint paths within $\Sinf_r$.
Then join $e_r$ and $-e_r$ by a straight line.

\medskip
As mentioned earlier, given how simple the situation is for bond percolation, much the most
interesting open cases of Conjecture~\ref{cmain} are for site percolation on $\Z^d$, $d\ge 4$.
At the moment this seems to be a surprisingly difficult combinatorial problem.

\end{document}